\newtheorem{theorem}{Theorem}[section]
\newtheorem{lemma}[theorem]{Lemma}
\newtheorem{proposition}[theorem]{Proposition}
\newtheorem{corollary}[theorem]{Corollary}
\theoremstyle{definition}
\newtheorem{definition}[theorem]{Definition}
\newtheorem{example}[theorem]{Example}
\newtheorem{remark}[theorem]{Remark}
\numberwithin{equation}{section}
\newcommand{\comment}[1]{\ifdraft{\todo[size=\small, color=orange!35]{#1}}{}}
\newcommand{\N}{\mathbb{N}}
\newcommand{\R}{\mathbb{R}}
\newcommand{\dif}{\,\mathrm{d}}
\DeclareMathOperator{\capt}{Cap}
\DeclareMathOperator{\supp}{supp}
\newcommand{\Class}[1]{[#1]}
\newcommand\my@saved@item{}
\newcommand\mydescriptionitem{}
\def\mydescriptionitem[#1]{%
  \my@saved@item[{\csname phantomsection\endcsname#1}]%
  \def\@currentlabel{\unexpanded{\unexpanded{#1}}}%
}%
\newenvironment{mydescription}%
               {%
                  \let\my@saved@item=\@item
                  \let\@item=\mydescriptionitem
                  \description
               }%
               {\csname enddescription\endcsname}
\begin{document}

\title{Some Remarks on Capacitary Integrals and Measure Theory}

\author{Augusto C. Ponce}
\address{Université catholique de Louvain,
Institut de recherche en mathématique et physique,
 Chemin du cyclotron 2, L7.01.02, 1348 Louvain-la-Neuve,
Belgium}
\email{Augusto.Ponce@uclouvain.be}

\author{Daniel Spector}
\address{National Taiwan Normal University,
Department of Mathematics,
No. 88, Section 4, Tingzhou Road, Wenshan District,
 Taipei City, Taiwan 116, R.O.C.}
\email{spectda@protonmail.com}

\thanks{D.~Spector is supported by the Taiwan Ministry of Science and Technology under research grant number 110-2115-M-003-020-MY3 and the Taiwan Ministry of Education under the Yushan Fellow Program.}

\subjclass[2010]{Primary ; Secondary }


\dedicatory{Dedicated to the memory of David R. Adams}

\keywords{}

\begin{abstract}
We present results for Choquet integrals with minimal assumptions on the monotone set function through which they are defined. 
They include the equivalence of sublinearity and strong subadditivity independent of regularity assumptions on the capacity, as well as various forms of standard measure theoretic convergence theorems for these non-additive integrals, e.g. Fatou's lemma and Lebesgue's dominated convergence theorem.   
\end{abstract}

\maketitle
\tableofcontents


%
%
%
%
%
%
%

\section{Introduction}

The oeuvre of David R.\@~Adams has had a profound influence on the study of Sobolev inequalities, including results early in his career on trace inequalities \cites{A1,A7,A11}, numerous papers over the years concerning potentials \cites{A4,A5,A8,A9,A12,A13,A19-a}, and of special interest in this paper, his body of work on capacities and Choquet integrals \cites{A2,A3,A10,A14,A16,A17,A18,A19,A23-1998,A24,A25,A26}.  
That one should be interested in the study of Choquet integration is clear from the consideration of strong forms of the Sobolev inequality, namely V. Maz'ya's capacitary inequalities \cites{Mazya1,Mazya1a,Mazya2} and their various extensions \cites{A14,Dahlberg:1979,A19,C6,Ponce-Spector:2018, Ponce-Spector:2020}, as it is precisely in these improvements to typical Lebesgue or Lorentz inequalities that these integrals make an appearance.  
These inequalities give usual compactness results, though are strong enough even to provide information about fine properties of functions, and therefore motivate the need for as robust as possible of a framework of Choquet integration which contains these main capacitary inequalities as examples.  The work we reference of D.\@~R.\@~Adams provides a number of results in this direction, most notably his survey \cites{A23-1998}.

The starting place of D.\@~R.\@~Adams is the treatise of G.\@~Choquet \cites{Choquet:1954}, who developed a theory of integration with respect to monotone, countably subadditive set functions with  additional regularity assumptions:  
We say that $H:\mathcal{P}(\mathbb{R}^d) \to [0,\infty]$,  defined on the class  $\mathcal{P}(\mathbb{R}^d)$ of all subsets of $\mathbb{R}^d$, is a capacity in the sense of Choquet whenever it satisfies the conditions
\begin{mydescription}
\item[{\sc empty set}]  \label{null} $H(\emptyset)=0$;
\item[{\sc monotonicity}]  \label{monotone} If $E \subset F \subset \R^d$, then $H(E)\leq H(F)$;
\item[{\sc countable subadditivity}] \label{countable_subadditivity} For every sequence of sets $E_n \subset \R^d$,
\begin{align*}
H\Bigl(\bigcup\limits_{n=0}^\infty E_n \Bigr) &\leq \sum_{n=0}^\infty H(E_n);
\end{align*}
\item[{\sc outer regularity}] \label{continuity_from_above}   For every non-increasing sequence of compact subsets $K_n \subset \R^d$,
\begin{align*}
H\Bigl(\bigcap\limits_{n=0}^\infty K_n \Bigr)= \lim_{n \to \infty} H(K_n);
\end{align*}
\item[{\sc inner regularity}]  \label{continuity_from_below}
For every nondecreasing sequence of sets $E_n \subset \R^d$,
\begin{align*}
H\Bigl(\bigcup\limits_{n=0}^\infty E_n \Bigr) = \lim_{n \to \infty} H(E_n).
\end{align*}
\end{mydescription}

Given a set function $H:\mathcal{P}(\mathbb{R}^d) \to [0,\infty]$ that merely satisfies \ref{monotone}, one can define the Choquet integral with respect to $H$ of any function $f : \R^{d} \to [0, \infty]$ as
\begin{align}\label{choquet_def}
\int  f \dif H\vcentcolon= \int_0^\infty H(\{ f >t\}) \dif t,
\end{align}
where the right-hand side is understood as the Lebesgue integral of the non-increasing function
\begin{align*}
t \in (0, \infty) \longmapsto H(\{ f > t\}).
\end{align*}
Such an integral has a number of desirable properties, for example \eqref{choquet_def} is positively $1$-homogeneous and monotone.  
Moreover, one may replace the sets $\{ f > t\}$ with $\{ f \geq t\}$ and obtain the same value for the integral.  
However, one consequence of the choice to integrate outside the framework of Measure Theory is that this integral need not be linear, and in fact may not even be sublinear.  Indeed, G.\@~Choquet \cite{Choquet:1954}*{54.2 on p.~289} established a necessary and sufficient condition on $H$ that the integral be sublinear:
\begin{theorem}\label{choquet}
Let $H$ be a capacity in the sense of Choquet.  Then, the Choquet integral \eqref{choquet_def} is sublinear if and only if $H$ is strongly subadditive.
\end{theorem}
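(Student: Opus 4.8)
The plan is to establish the two implications separately; the converse (strong subadditivity $\Rightarrow$ sublinearity) is the substantial one.

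For the direction \emph{sublinearity $\Rightarrow$ strong subadditivity}, which is elementary, I would simply feed the two indicator functions $\chi_E$ and $\chi_F$ into the sublinearity inequality. The function $\chi_E + \chi_F$ takes the value $2$ on $E \cap F$, the value $1$ on the symmetric difference $E \triangle F$, and $0$ elsewhere, so its superlevel sets are $\{\chi_E + \chi_F > t\} = E \cup F$ for $t \in [0,1)$ and $= E \cap F$ for $t \in [1,2)$. The layer-cake definition \eqref{choquet_def} then gives $\int (\chi_E + \chi_F) \dif H = H(E \cup F) + H(E \cap F)$, while $\int \chi_E \dif H = H(E)$ and $\int \chi_F \dif H = H(F)$. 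Sublinearity is thus exactly the inequality $H(E \cup F) + H(E \cap F) \le H(E) + H(F)$ for all $E, F \subseteq \R^d$, i.e.\ strong subadditivity.

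For the converse I would first record a monotone convergence property that follows from inner regularity: if $0 \le f_n \uparrow f$, then $\{f_n > t\} \uparrow \{f > t\}$ for every $t$, so $H(\{f_n > t\}) \to H(\{f > t\})$ and, by the Lebesgue monotone convergence theorem applied to \eqref{choquet_def}, $\int f_n \dif H \to \int f \dif H$. The crux of the argument is then the following single-layer estimate: for every $f \ge 0$, every $c > 0$ and every $B \subseteq \R^d$,
\[
\int (f + c\chi_B) \dif H \le \int f \dif H + c\,H(B).
\]
To prove it I would set $E_t = \{f > t\}$ and note the pointwise identity $\{f + c\chi_B > t\} = E_t \cup (B \cap E_{t-c})$, where $E_t \subseteq E_{t-c}$. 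Applying strong subadditivity to the pair $E_t$ and $B \cap E_{t-c}$, whose intersection is $B \cap E_t$, yields $H(\{f + c\chi_B > t\}) \le H(E_t) + H(B \cap E_{t-c}) - H(B \cap E_t)$. Integrating in $t$ and writing $\psi(s) = H(B \cap \{f > s\})$, with $\psi(s) = H(B)$ for $s < 0$ since $f \ge 0$, reduces the estimate to the identity $\int_0^\infty [\psi(t-c) - \psi(t)] \dif t = c\,H(B)$.

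I expect the evaluation of this last integral to be the main technical point. The case $\int f \dif H = \infty$ is trivial, so I would assume $\int f \dif H = \int_0^\infty H(E_t)\dif t < \infty$; then $\psi \le H(E_t)$ is integrable on $(0,\infty)$, and the change of variables $u = t - c$ gives $\int_0^\infty \psi(t-c)\dif t - \int_0^\infty \psi(t)\dif t = \int_{-c}^0 \psi = c\,H(B)$, with no $\infty - \infty$ ambiguity. With the single-layer estimate established, I would conclude by induction and approximation. Writing a nonnegative simple function $g$ in layer-cake form $g = \sum_{j=1}^m \alpha_j \chi_{G_j}$ with $\alpha_j > 0$ and $G_1 \supseteq \dots \supseteq G_m$, and applying the single-layer estimate successively to $f + \sum_{i < j} \alpha_i \chi_{G_i}$, telescopes to $\int (f + g)\dif H \le \int f \dif H + \sum_{j} \alpha_j H(G_j) = \int f \dif H + \int g \dif H$ for arbitrary $f \ge 0$. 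Finally, approximating a general $g \ge 0$ from below by simple functions and invoking the monotone convergence property recorded above extends sublinearity to all nonnegative $f$ and $g$.
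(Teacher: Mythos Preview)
Your proof is correct, and the easy direction (sublinearity $\Rightarrow$ strong subadditivity) is identical to the paper's.

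For the hard direction you take a genuinely different route. The paper proceeds via a combinatorial reorganization lemma (Lemma~\ref{lemmaChoquet} and Proposition~\ref{algebraic}): any finite sum $\sum \chi_{C_i}$ can be rewritten as $\sum \chi_{A_i}$ with the $A_i$ nested, and strong subadditivity forces $\sum H(A_i) \le \sum H(C_i)$; this yields sublinearity for $\N/k$-valued functions (Corollary~\ref{corollaryChoquetDiscrete}), after which the general case follows by approximating $f,g$ from below by $\lfloor kf\rfloor/k$, $\lfloor kg\rfloor/k$ and using only monotonicity of the Choquet integral together with Fatou's lemma for the Lebesgue integral in the layer-cake formula. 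Your argument instead adds one indicator at a time to an arbitrary $f$, proving the single-layer estimate $\int (f + c\chi_B)\dif H \le \int f\dif H + cH(B)$ by a direct telescoping computation on level sets, then handles simple $g$ by iteration and passes to general $g$ by monotone convergence. The trade-off: your approach is pleasantly direct and sidesteps the combinatorial induction, but the final step invokes monotone convergence for the Choquet integral, which you derive from \ref{continuity_from_below}. The paper's whole point (Theorem~\ref{sublinear}) is that this regularity hypothesis is superfluous---only \ref{monotone} is needed---and its floor-function discretization is engineered precisely to avoid any such appeal. So your argument is adequate for Theorem~\ref{choquet} as stated, but does not extend to the paper's stronger Theorem~\ref{sublinear}.
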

Here we recall the notion of
\begin{mydescription}
\item[{\sc strong subadditivity}] \label{strong_subadditive} 
For every sets $E, F \subset \R^d$,
\comment{We need to choose the label accordingly.}
\begin{align*}
H(E\cap F)+H(E\cup F) \leq H(E)+H(F).
\end{align*}
\end{mydescription}

The sublinearity of the integral implies one has a triangle inequality, from which H\"older's and Minkowski's inequalities follow from usual convexity arguments.  These inequalities in turn serve as a basis for the study of a family of Banach spaces of functions $L^p(H)$, those suitably regular functions whose $p$th power has finite Choquet integral.  Here typical questions have concerned the boundedness of maximal functions \cites{A19,C8,STW}, characterizations of the topological duals \cites{A19,C10}, and interpolation theory \cites{CCM,CJ,CMS}.  
The assumption one has a capacity in the sense of Choquet ensures that even without the full strength of results from Measure Theory one has a number of useful tools, e.g.\@ Fatou's lemma (which follows from \ref{monotone} and \ref{continuity_from_below}, see \cite{CMS}*{Theorem~1 on pp.~98--99}):
\begin{align}\label{Choquet_Fatou}
\int \liminf\limits_{n \to \infty}{f_n} \dif H
\le \liminf_{n \to \infty}{\int f_n \dif H},
\end{align}
for every sequence of functions $f_n: \R^d \to [0,\infty]$.  As a result the Choquet integral built on a strongly subadditive capacity in the sense of Choquet enjoys countable sublinearity:  For every sequence of functions $f_n: \R^d \to [0,\infty]$ one has
\begin{align}\label{Choquet_Fatou_2}
\int \sum_{n=0}^\infty f_n \dif H
\le \sum_{n=0}^\infty {\int f_n \dif H},
\end{align}
and a Fatou-type lemma that is often appealed to (see e.g. the argument on p.~123 of \cite{A19}:  If $f_n \to f$ locally in $L^1(\mathbb{R}^d)$ (or pointwise almost everywhere), then
\begin{align*}
\int \mathcal{M}f \dif H
\le \liminf_{n \to \infty}{\int \mathcal{M}f_n \dif H},
\end{align*}
where 
\begin{align*}
\mathcal{M}f(x) := \sup_{r>0} \frac{1}{r^d}\int_{B_r(x)} |f(y)|\dif y
\end{align*}
is the Hardy-Littlewood maximal function.  These results are a small sample of the theory of Choquet integration developed and recorded for capacities in the sense of Choquet, and we refer the reader to \cites{A19,Choquet:1954,CMS} for further details.

Unfortunately, in practice the capacities that arise in various inequalities \cites{Mazya1,Mazya1a,Mazya2,A14,Dahlberg:1979,A19,C6} may fail to satisfy \ref{continuity_from_below} or \ref{continuity_from_above}, a notable example being the Hausdorff content or its dyadic version, see \cite{HH23} and Example~\ref{content_example} below. 
It is natural then that one address the necessity of these regularity assumptions in the resulting theory of Choquet integration.  
This program was initiated by Adams, who typically did not require that the set functions under consideration be capacities in the sense of Choquet, often with the initial assumptions of only \ref{null}, \ref{monotone}, and \ref{countable_subadditivity}. 
He referred to these objects as capacities in the sense of N.~Meyers, though the reader may also recognize these are the defining properties of an outer measure.  To these he then added a continuity assumption, \ref{continuity_from_above} or \ref{continuity_from_below}, either of which is sufficient to obtain Choquet's characterization of sublinearity of the integral (see Anger's paper \cite{Anger:1977} for the proof assuming \ref{continuity_from_above} or Saito, Tanaka, and Watanabe's paper \cite{STW}*{Proposition~3.2} for the proof assuming \ref{continuity_from_below}).  The first observation of this paper is that neither assumption is necessary, that one has the following characterization independent of regularity assumptions.

\begin{theorem}\label{sublinear}
Suppose that $H$ satisfies \ref{monotone}.  
Then, the Choquet integral \eqref{choquet_def} is sublinear if and only if $H$ is strongly subadditive.
\end{theorem}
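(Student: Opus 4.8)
The plan is to prove the two implications separately, the forward one being elementary and the reverse one carrying all the content. Throughout I use the normalization $H(\emptyset)=0$, which is implicit in the statement: if $H(\emptyset)>0$ then $t\mapsto H(\{f>t\})$ is eventually bounded below by a positive constant, so every Choquet integral equals $+\infty$.

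For the direction ``sublinear implies strongly subadditive'' I would simply test the hypothesis on indicators. For $E,F\subseteq\R^d$ one has $\{\chi_E>t\}=E$ for $t\in(0,1)$ and $\{\chi_E>t\}=\emptyset$ for $t\ge 1$, so $\int\chi_E\dif H=H(E)$. Using the pointwise identity $\chi_E+\chi_F=\chi_{E\cup F}+\chi_{E\cap F}$ and reading off the level sets of the right-hand side gives $\int(\chi_E+\chi_F)\dif H=H(E\cup F)+H(E\cap F)$. Sublinearity applied to $\chi_E$ and $\chi_F$ then reads $H(E\cup F)+H(E\cap F)\le H(E)+H(F)$, which is exactly strong subadditivity.

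For the converse it suffices to prove subadditivity $\int(f+g)\dif H\le\int f\dif H+\int g\dif H$ for all $f,g\colon\R^d\to[0,\infty]$. The whole difficulty, and the reason one wishes to avoid \ref{continuity_from_above} and \ref{continuity_from_below}, is that one cannot approximate $f,g$ by simple functions and pass the limit through $H$. I would therefore isolate the effect of adding a single layer, in the form of the key lemma: for every $w\ge 0$, every $A\subseteq\R^d$ and every $c\ge 0$, $\int(w+c\chi_A)\dif H\le\int w\dif H+c\,H(A)$. To prove it I decompose the superlevel sets, $\{w+c\chi_A>t\}=\{w>t\}\cup A$ for $0<t<c$ and $\{w+c\chi_A>t\}=\{w>t\}\cup(A\cap\{w>t-c\})$ for $t\ge c$; applying strong subadditivity at each level $t$ (and using $\{w>t\}\cap A\cap\{w>t-c\}=\{w>t\}\cap A$) bounds $H(\{w+c\chi_A>t\})$ by $H(\{w>t\})$ plus a correction built from $H(A\cap\{w>\cdot\})$. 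Integrating in $t$ and substituting $s=t-c$ in the tail makes every correction cancel except the single term $c\,H(A)$. This step is the heart of the matter, and it is the one place where strong subadditivity is used.

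It remains to pass from the lemma to the full inequality, and here the main obstacle is organizing the limits so that no regularity of $H$ re-enters. First I would reduce to bounded functions via the identity $\{f+g>s\}=\{\min(f,S)+\min(g,S)>s\}$, valid for $0<s<S$, which yields $\int_0^S H(\{f+g>s\})\dif s\le\int\min(f,S)\dif H+\int\min(g,S)\dif H\le\int f\dif H+\int g\dif H$ once subadditivity is known for the bounded pair, and then letting $S\to\infty$ in the outer integral. For bounded $g\le S$ the discretization $g^{(h)}=\sum_{l=1}^{\lfloor S/h\rfloor}h\,\chi_{\{g\ge lh\}}$ is a \emph{finite} sum, so finitely many applications of the lemma give $\int(f+g^{(h)})\dif H\le\int f\dif H+\sum_{l\ge 1}h\,H(\{g\ge lh\})$. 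Letting $h\to 0$, the right side is a Riemann sum of the non-increasing function $t\mapsto H(\{g\ge t\})$ and tends to $\int g\dif H$, while $f+g^{(h)}\ge f+g-h$ forces $\int(f+g^{(h)})\dif H\ge\int_h^\infty H(\{f+g>s\})\dif s\to\int(f+g)\dif H$. The point I would stress is that every limit here is a limit of the monotone integrand $t\mapsto H(\{\,\cdot\,>t\})$ taken inside the outer Lebesgue integral, either as a Riemann sum or as a shift of the domain of integration; each uses only \ref{monotone}, and at no stage is $H$ evaluated along a monotone sequence of sets. This is precisely what allows the argument to dispense with \ref{continuity_from_above} and \ref{continuity_from_below}.
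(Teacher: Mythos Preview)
Your proof is correct, and the reverse implication takes a genuinely different route from the paper's. The forward direction is identical: both test sublinearity on $\chi_E,\chi_F$ and read off strong subadditivity from $\int(\chi_E+\chi_F)\dif H = H(E\cup F)+H(E\cap F)$.

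For the converse, the paper proceeds combinatorially: Lemma~\ref{lemmaChoquet} and Proposition~\ref{algebraic} show by induction that any finite family $C_1,\dots,C_n$ can be rearranged into a nested family $A_1\subset\dots\subset A_n$ with the same sum of indicators and no larger sum of capacities. This yields sublinearity for $\N/k$-valued functions (Corollary~\ref{corollaryChoquetDiscrete}), after which both $f$ and $g$ are approximated from below by $\lfloor kf\rfloor/k$ and $\lfloor kg\rfloor/k$. Your approach replaces this set-rearrangement induction by an analytic ``single-layer'' lemma, $\int(w+c\chi_A)\dif H\le\int w\dif H+cH(A)$, proved by applying strong subadditivity levelwise and exploiting the telescoping identity $\int_c^\infty H(A\cap\{w>t-c\})\dif t=\int_0^\infty H(A\cap\{w>s\})\dif s$; you then discretize only $g$ and iterate the lemma finitely many times. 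What the paper's argument buys is a transparent explanation of why strong subadditivity is exactly the condition that allows one to ``sort'' arbitrary indicators into a chain; what your argument buys is that one function stays completely general throughout, with no simultaneous discretization and no induction on the number of sets. Both arguments share the essential feature emphasized in the paper: every limit is taken in the outer Lebesgue integral (Riemann sums of a monotone integrand, or shifting the domain of integration), so neither \ref{continuity_from_above} nor \ref{continuity_from_below} is invoked.
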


Here is a simple example for which Theorem~\ref{sublinear} applies but not Theorem~\ref{choquet}:
\begin{example}
\label{exampleCapacityFiniteSets}
Let $H$ be the set function defined by
\[   H(A)\vcentcolon= 
\begin{cases}
      0 & \text{if \(A\) is finite,} \\
      1 & \text{if \(A\) is infinite.}
\end{cases} 
\]
Then, $H$ is strongly subadditive and satisfies \ref{null} and \ref{monotone}, but does not satisfy \ref{countable_subadditivity}, \ref{continuity_from_above} or \ref{continuity_from_below}.
\end{example}

A second pertinent example is the dyadic Hausdorff content for which Yang and Yuan \cite{Yang-Yuan:2008} observed the following

\begin{example}\label{content_example}
Let $0 < \beta < d$ and let $H=\tilde{\mathcal{H}}^\beta_\infty$ be the set function defined by
\begin{align}\label{content}
\tilde{\mathcal{H}}^\beta_\infty(E)\vcentcolon= \inf \left\{ \sum_{n=0}^\infty \ell(Q_n)^\beta : 
Q_n \text{ is a dyadic cube and  } E \subset \bigcup_{n=0}^\infty Q_n  \right\},
\end{align}
where \(\ell(Q_n)\) denotes the side-length of \(Q_n\).
Then, $H$ is strongly subadditive and satisfies \ref{null}, \ref{monotone}, \ref{countable_subadditivity}, and \ref{continuity_from_below}, but not \ref{continuity_from_above} for $\beta \le d-1$.
\end{example}

The initial impetus for this work was the question of the validity of Theorem~\ref{sublinear}, though after obtaining a proof we discovered in our broader literature review that this was  known to the community of non-additive Measure Theory \cite{Denneberg}*{Chapter~6}.  
As it seems to have not been referenced in the results after Adams, we give the proof below for the convenience of the reader, the idea of which is as follows:  First, one proves an algebraic result, which amounts to sublinearity for finite sums of characteristic functions (see e.g., the argument at the top of p.~249 of \cite{Anger:1977}, the argument on pp.~766--768 of \cite{STW}, or Proposition~\ref{algebraic} below);  Second, one argues the general case by approximation.  
In the papers \cites{Anger:1977,STW} this is performed invoking either \ref{continuity_from_above} or \ref{continuity_from_below} to justify the limit, though as we show below it can be done by using only \ref{monotone} and properties of the Lebesgue integral in \eqref{choquet_def}.

That the Choquet integral is sublinear without any regularity assumptions is perhaps surprising to the community working on capacitiary inequalities in the spirit of Adams, e.g.\@ \cites{Cap1, Cap2, C1,C2,C3,C4,C5,C6,C7,C8,C9,C10,C11, STW, Ponce-Spector:2018, Ponce-Spector:2020, Spector:PM, Chen-Spector}, and suggests that it should be interesting to understand what other aspects of the theory of Choquet integration relies on these regularity assumptions and in what areas it can be dispensed.  In this paper we take up this question as pertains to analogues of measure theoretic results, in particular Fatou's lemma and Lebesgue's dominated convergence theorem, as well as functional analysis results concerning spaces of functions with finite capacitary integral.  Our results show that while one does not need regularity of the capacity, in one way or another regularity must make an appearance in order to obtain such results.  In particular, if one assumes regularity on any of the capacity, the mode of convergence, or the functions involved, then it is possible to obtain analogues of \eqref{Choquet_Fatou} and \eqref{Choquet_Fatou_2}, see Figure~\ref{figureTrinity}.

\begin{figure}
\centering
\includegraphics[width = 0.5 \textwidth]{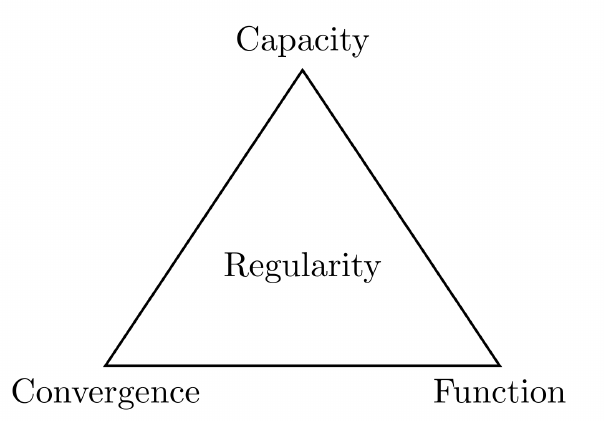}
\caption{Trinity of assumptions}
\label{figureTrinity}
\end{figure}

The plan of the paper is to make precise the sketch presented in Figure~\ref{figureTrinity}, as well as to meticulously develop the assumptions that lead to various results of the measure theoretic and functional analytic aspects of Choquet integration.  
Our results are roughly organized in terms of increasing assumptions on the capacity as one proceeds through the paper.  With this framework, in Section~\ref{quconvergence}, we rely on a strong form of pointwise convergence with respect to $H$, namely quasi-uniform convergence, which is sufficient to obtain versions of Fatou's lemma and Lebesgue's dominated convergence theorem for Choquet integrals with minimal assumptions on the capacity.  
Conversely, we show that convergence of a sequence of functions with respect to the Choquet integral implies that the sequence has this strong convergence property, the idea of which follows the typical completeness argument for the functional space $L^1(H)$.  
In Section~\ref{sectionApproximation}, we recall the notion of quasicontinuity.  
When one imposes certain additional conditions on $H$, we show that quasicontinuous functions admit approximation with respect to the Choquet integral by functions in \(C_c(\R^d)\), the class of  continuous functions that are compactly supported in \(\R^d\).  
In Section~\ref{proofs}, we prove Theorem~\ref{sublinear} on the sublinearity of the Choquet integral.  
In Section~\ref{lowersemicontinuity} we show how one can relax the notion of convergence provided one works within the class of quasicontinuous functions.  
This relies on a classical application of the Hahn-Banach Theorem that realizes the Choquet integral as a supremum over Lebesgue integrals with respect to locally finite measures underneath the capacity.  
In Section~\ref{Lone}, we introduce the Banach space $L^1(H)$ as the set of equivalence classes of quasicontinuous functions for which the Choquet integral is finite.  
The results in the preceding sections are then shown to imply standard results concerning this space such as completeness, density of \(C_c(\R^d)\), after which we discuss Fatou's lemma in this context, namely we provide closure properties that guarantee that the limit of a sequence of functions remains in \(L^1(H)\).  
For ease of reference we provide proofs of all of our assertions.  
This paper is an homage to David R.\@~Adams' work, for which we are extremely grateful, and is dedicated to his memory.

\section{Quasi-uniform convergence}\label{quconvergence}

In this section, we present some convergence properties for the Choquet integral with respect to a monotone set function \(H : \mathcal{P}(\R^d) \to [0, \infty]\) under quasi-uniform convergence of the integrand.

\begin{definition}
Let \((f_n)_{n \in \N}\) be a sequence of functions \(f_n : \R^d \to [-\infty, \infty]\).
We say that \((f_n)_{n \in \N}\) converges quasi-uniformly to \(f : \R^d \to [-\infty, \infty]\) whenever, for each \(\epsilon > 0\), there exists \(E \subset \R^d\) such that \(H(E) \le \epsilon\) and \((f_n)_{n \in \N}\) is finite and converges uniformly to \(f\) in \(\R^d \setminus E\).
We denote
\[
f_n \to f 
\quad \text{q.u.}
\]
\end{definition}

For simplicity, we omit the dependence of \(H\).
In the case where \(H\) is a measure, almost everywhere pointwise convergence implies quasi-uniform convergence on sets of finite measure.
However, this is not true for capacities in general:

\begin{example}
Take a capacity \(H\) such that \(H(\partial B_r) \ge \eta\) for every \(1 < r < 2\) and some fixed \(\eta > 0\), where \(B_r \vcentcolon= B_r(0)\) is the ball of radius \(r\) centered at \(0\).
Such is the case with the Hausdorff content \(\mathcal{H}^\beta_\infty\) with \(\beta \le d - 1\) or the Sobolev capacity \(\capt_{1, p}\) for \(1 \le p < d\), see \cites{AH,P} for their definitions and further properties.
If \((f_n)_{n \in \N_*}\) is any sequence of functions in \(\R^d\) such that \(f_n \ge 1\) on \(\partial B_{1 + 2/n}\) with \(f_n\) supported in \(B_{1 + 3/n} \setminus B_{1 + 1/n}\), then \(f_n \to 0\) pointwise in \(\R^d\), but
not quasi-uniformly since \(H(\partial B_{1 + 2/n}) \ge \eta\) for every \(n \in \N_*\).
\end{example}

In this section we rely mostly on subadditivity involving finitely many sets:
\begin{mydescription}
\item[{\sc finite subadditivity}] \label{finite_subadditivity}  For every \(E, F \subset \R^d\),
    \[
    H(E \cup F)
    \le H(E) + H(F).
    \]
\end{mydescription}

The following version of Fatou's lemma for quasi-uniform convergence then holds:

\begin{proposition}
    \label{propositionUniformFatou}
    Suppose that \(H\) satisfies \ref{monotone} and \ref{finite_subadditivity}.
    If \((f_n)_{n \in \N}\) is a sequence of nonnegative functions in \(\R^d\) such that \(f_n \to f\) q.u., then
    \[
    \int f \dif H
    \le \liminf_{n \to \infty}{\int f_n \dif H}.
    \]
\end{proposition}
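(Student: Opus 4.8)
The plan is to translate quasi-uniform convergence into an inclusion of superlevel sets, to transfer this inclusion to the quantities $H(\{f > t\})$ appearing in the definition \eqref{choquet_def} using \ref{monotone} and \ref{finite_subadditivity}, and then to integrate in the height variable $t$, being careful to control the error coming from the exceptional set.

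First, fix $\epsilon > 0$ and, by quasi-uniform convergence, choose $E \subset \R^d$ with $H(E) \le \epsilon$ on whose complement $f_n \to f$ uniformly. Fixing also $\delta > 0$, uniform convergence furnishes $N \in \N$ such that $f_n > f - \delta$ on $\R^d \setminus E$ for all $n \ge N$. Hence, for every $t > 0$ and every $n \ge N$,
\[
\{ f > t + \delta \} \subset \{ f_n > t \} \cup E,
\]
because a point where $f > t + \delta$ either belongs to $E$ or, lying in $\R^d \setminus E$, satisfies $f_n > f - \delta > t$. Applying \ref{monotone} and then \ref{finite_subadditivity} gives the pointwise-in-$t$ bound
\[
H(\{ f > t + \delta \}) \le H(\{ f_n > t \}) + H(E) \le H(\{ f_n > t \}) + \epsilon .
\]

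The main obstacle is that this additive error $\epsilon$ is uniform in $t$, so integrating directly over the whole half-line would produce an infinite contribution; moreover, since $H$ need not be continuous from below, one cannot simply let $\delta \to 0$ inside the integral and expect $H(\{f > t + \delta\})$ to recover $H(\{f > t\})$. I circumvent both difficulties by integrating over a bounded interval and ordering the limits correctly. Integrating the last display over $t \in (0, T)$ and substituting $s = t + \delta$ yields, for $n \ge N$,
\[
\int_\delta^{T} H(\{ f > s \}) \dif s \le \int_\delta^{T + \delta} H(\{ f > s \}) \dif s \le \int_0^T H(\{ f_n > t \}) \dif t + T \epsilon \le \int f_n \dif H + T \epsilon,
\]
where the first inequality merely drops a nonnegative tail and serves to restore the unshifted superlevel function $s \mapsto H(\{f > s\})$.

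It then remains to pass to the limit in the parameters $n$, $\delta$, $\epsilon$, $T$ in the right order. Taking $\liminf_{n \to \infty}$ (the quantity $T\epsilon$ being independent of $n$), then letting $\delta \to 0^+$ so that $\int_\delta^T H(\{f > s\}) \dif s$ increases to $\int_0^T H(\{f > s\}) \dif s$ by monotone convergence for the Lebesgue integral, I obtain
\[
\int_0^T H(\{ f > s \}) \dif s \le \liminf_{n \to \infty} \int f_n \dif H + T \epsilon .
\]
Since the left-hand side is independent of $\epsilon$, letting $\epsilon \to 0$ removes the error term, and a final passage $T \to \infty$---again monotone convergence, now over the expanding interval $(0, T)$---identifies the left-hand side with $\int_0^\infty H(\{f > s\}) \dif s = \int f \dif H$ and yields the claim. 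The only steps demanding care are these monotone-convergence passages in the height variable, which are legitimate precisely because $s \mapsto H(\{f > s\})$ is nonnegative and non-increasing, so that the substitution trick above lets me avoid any appeal to continuity properties of $H$ itself.
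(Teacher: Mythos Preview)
Your proof is correct and follows essentially the same approach as the paper's: both derive the superlevel-set inclusion $\{f > t + \delta\} \subset \{f_n > t\} \cup E$ from uniform convergence off an exceptional set, integrate in $t$ over a bounded interval to keep the error $H(E)\cdot(\text{length})$ finite, and then pass to the limit in the parameters in the same order ($n \to \infty$, then $\epsilon \to 0$, then the interval endpoints). The only cosmetic difference is that the paper keeps the shifted interval $(\eta, k+\eta)$ throughout, whereas you drop the tail $(T, T+\delta)$ early to work on $(\delta, T)$; your justification of the monotone-convergence passages is slightly more explicit.
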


\begin{proof}
    Given \(\epsilon > 0\), there exists \(E \subset \R^d\) such that \(H(E) \le \epsilon\) and \(f_n \to f\) uniformly in \(\R^d \setminus E\).
    Given \(\eta > 0\), take \(N \in \N\) such that, for \(n \ge N\), 
    \[
    |f_n - f| \le \eta 
    \quad \text{in \(\R^d \setminus E\).}
    \]
    We then have
    \[
    f \le f_n + |f_n - f| \le f_n + \eta 
    \quad \text{in \(\R^d \setminus E\).}
    \]
    Hence, for every \(t > 0\),
    \[
    \{f > t + \eta\} \setminus E \subset \{f_n > t\}
    \]
    which implies that
    \[
    \{f > t + \eta\} \subset \{f_n > t\} \cup E.
    \]
    By monotonicity and finite subadditivity of \(H\), we get
    \[
    H(\{f > t + \eta\}) 
    \le H(\{f_n > t\}) + H(E).
    \]
    Fix \(k \in \N\).
    Integrating with respect to \(t\) over the interval \((0, k)\),
    \[
    \begin{split}
    \int_\eta^{k + \eta} H(\{f > s\}) \dif s
    & = \int_0^{k} H(\{f > t + \eta\}) \dif t\\
    & \le \int_0^{k} H(\{f_n > t\}) \dif t + H(E) k
    \le \int f_n \dif H + \epsilon k.
    \end{split}
    \]
    This estimate holds for every \(n \ge N\).
    Letting \(n \to \infty\), we get
    \[
    \int_\eta^{k + \eta} H(\{f > s\}) \dif s
    \le \liminf_{n \to \infty}{\int f_n \dif H} + \epsilon k.
    \]
    As \(\epsilon \to 0\), we deduce that
    \[
    \int_\eta^{k + \eta} H(\{f > s\}) \dif s
    \le \liminf_{n \to \infty}{\int f_n \dif H}.
    \]
    To conclude we let \(k \to \infty\) and \(\eta \to 0\).
    We then get by Fatou's lemma for the Lebesgue measure,
    \[
    \int f \dif H
    = \int_0^{\infty} H(\{f > s\}) \dif s
    \le \liminf_{n \to \infty}{\int f_n \dif H}.
    \qedhere
    \]
\end{proof}

We now show the following version of the Dominated Convergence Theorem:

\begin{proposition}
    Suppose that \(H\) satisfies \ref{monotone} and \ref{finite_subadditivity}.
    If \((f_n)_{n \in \N}\) is a sequence of real-valued functions in \(\R^d\) such that \(f_n \to f\) q.u.\@ and if there exists \(F : \R^d \to [0, \infty]\) such that \(\int F \dif H < \infty\) and \(|f_n| \le F\) in \(\R^d\) for every \(n \in \N\), then  \(\int |f| \dif H < \infty\) and
    \[
    \lim_{n \to \infty}{\int |f_n - f| \dif H} = 0.
    \]
\end{proposition}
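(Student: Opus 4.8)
The plan is to reduce everything to the integrability of the single dominating function $F$ together with the elementary fact that sets of $H$-capacity zero are invisible to the Choquet integral. First I would record this invisibility: if $H(N) = 0$, then for any $g \ge 0$ and any $t > 0$, monotonicity and finite subadditivity give $H(\{g > t\}) \le H(\{g > t\} \setminus N) + H(N) = H(\{g > t\} \setminus N) \le H(\{g > t\})$, so these are equal and $N$ may be deleted inside every superlevel set without changing the integral.

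Next I would pin down $f$ and produce the pointwise bounds. Applying quasi-uniform convergence with $\epsilon = 1/j$ yields sets $E_j$ with $H(E_j) \le 1/j$ on whose complements $f_n \to f$ pointwise; setting $N \vcentcolon= \bigcap_j E_j$ gives $H(N) = 0$ and $f_n \to f$ pointwise in $\R^d \setminus N$. Since $|f_n| \le F$ everywhere, passing to the limit gives $|f| \le F$ in $\R^d \setminus N$, hence $|f_n - f| \le 2F$ there. The invisibility of $N$ then upgrades these to the integral statements $\int |f| \dif H \le \int F \dif H < \infty$ and $H(\{|f_n - f| > t\}) \le H(\{2F > t\})$ for every $t > 0$.

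The core estimate splits $\int |f_n - f| \dif H = \int_0^\infty H(\{|f_n - f| > t\}) \dif t$ into three ranges dictated by the integrability of $t \mapsto H(\{2F > t\})$, which is finite since by positive $1$-homogeneity $\int 2F \dif H = 2 \int F \dif H < \infty$. Given $\epsilon > 0$, I would choose a small cutoff $\eta$ and a large cutoff $T$ so that both $\int_0^\eta H(\{2F > t\}) \dif t$ and $\int_T^\infty H(\{2F > t\}) \dif t$ are below $\epsilon$; on these two ranges the bound $H(\{|f_n - f| > t\}) \le H(\{2F > t\})$ controls the integral uniformly in $n$. For the middle range $[\eta, T]$ I would invoke quasi-uniform convergence once more, with tolerance $\delta = \epsilon / T$, to obtain $E \subset \R^d$ with $H(E) \le \delta$ and $|f_n - f| \le \eta$ in $\R^d \setminus E$ for all large $n$; then $\{|f_n - f| > t\} \subset E$ whenever $t \ge \eta$, so $H(\{|f_n - f| > t\}) \le \delta$ there and the middle contribution is at most $\delta T = \epsilon$. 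Summing, $\int |f_n - f| \dif H < 3\epsilon$ for all large $n$, and letting $\epsilon \to 0$ concludes.

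I expect the main obstacle to be arranging the three-way split so that the estimates are genuinely uniform in $n$: the crude bound $H(\{|f_n - f| > t\}) \le \delta$ is only cheap on the bounded middle interval, so it is essential that the domination by $F$ absorbs both the small-$t$ blow-up and the large-$t$ tail \emph{before} the quasi-uniform estimate is applied. The subtlety is that the bad set $E$ governing the middle range has small but positive capacity, so its contribution must be tamed by the finite length $T - \eta$ rather than by any smallness of $H(E)$ alone; balancing $\delta$ against $T$ is precisely what makes the argument close.
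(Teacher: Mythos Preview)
Your argument is correct, and the three-way split of the Choquet integral into small-$t$, middle, and large-$t$ ranges is exactly the structure the paper uses. The genuine difference lies in how the domination of $|f_n - f|$ is obtained. The paper first invokes its quasi-uniform Fatou lemma (Proposition~\ref{propositionUniformFatou}) to show $\int |f| \dif H < \infty$, then uses the pointwise inequality $|f_n - f| \le F + |f|$ everywhere and handles the tails via $H(\{F + |f| > t\}) \le H(\{F \ge t/2\}) + H(\{|f| \ge t/2\})$, appealing to finite subadditivity at this final step. You instead manufacture a null set $N = \bigcap_j E_j$ from repeated applications of quasi-uniform convergence, establish the invisibility of $N$ to the Choquet integral, and thereby get the cleaner domination $|f_n - f| \le 2F$ off $N$ together with $\int |f| \dif H \le \int F \dif H$ directly---no Fatou-type lemma needed. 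Your route is thus more self-contained, while the paper's route reuses machinery already in place; both close with the same balancing of $\eta$, the large cutoff, and the capacity of the exceptional set in the middle range.
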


\begin{proof}
    Since \(|f_n| \to |f|\) q.u.\@ and \(|f_n| \le F\) in \(\R^d\), by Proposition~\ref{propositionUniformFatou} and by monotonicity of the Choquet integral we have
    \[
    \int |f| \dif H
    \le \liminf_{n \to \infty}{\int |f_n| \dif H}
    \le \int F \dif H
    < \infty.
    \]
    Given \(\epsilon > 0\), take \(E \subset \R^d\) with \(H(E) \le \epsilon\) such that \((f_n)_{n \in \N}\) converges uniformly to \(f\) in \(\R^d \setminus E\). 
    Then, given \(\eta > 0\), let \(N \in \N\) be such that, for every \(n \ge N\), \(|f_n - f| \le \eta\) in \(\R^d \setminus E\).
    Thus, for \(t > \eta\) and \(n \ge N\), we have 
    \[
    \{|f_n - f| > t\} \subset E,
    \]
    whence, by monotonicity of \(H\),
    \begin{equation}
    \label{eq425}
    H(\{|f_n - f| > t\}) 
    \le H(E) \le \epsilon.
    \end{equation}
    Also, for any \(t > 0\),
    \begin{equation}
    \label{eq431}
    \{|f_n - f| > t\} \subset \{F + |f| \ge t\}.
    \end{equation}
    It follows from \eqref{eq425} and \eqref{eq431} that, for any \(k > \eta\) and \(n \ge N\),
    \[
    \begin{split}
    \int |f_n - f| \dif H
    & = \int_0^\eta
    + \int_\eta^k 
    + \int_k^\infty H(\{|f_n - f| > t\}) \dif t\\
    & \le \int_0^\eta H(\{F + |f| > t\}) \dif t
    + k \epsilon 
    + \int_k^\infty H(\{F + |f| > t\}) \dif t.
    \end{split}
    \]
As \(n \to \infty\),
\begin{multline}
\label{eq448}
\limsup_{n \to \infty}{\int |f_n - f| \dif H}\\
\le \int_0^\eta H(\{F + |f| > t\}) \dif t
    + k \epsilon 
    + \int_k^\infty H(\{F + |f| > t\}) \dif t.
\end{multline}
By finite subadditivity of \(H\),
\[
H(\{F + |f| > t\})
\le H(\{F \ge t/2\}) + H(\{|f| \ge t/2\}).
\]
Since both \(F\) and \(|f|\) have finite Choquet integrals, we have the conclusion by letting \(\epsilon, \eta \to 0\) and then \(k \to \infty\) in \eqref{eq448}.
\end{proof}

The proof of the partial converse of the Dominated Convergence Theorem involves countable subadditivity:

\begin{proposition}
    \label{propositionDominatedConvergenceConverse}
    Suppose that \(H\) satisfies \ref{monotone} and \ref{countable_subadditivity}.
    Let \((f_n)_{n \in \N}\) be a sequence of real-valued functions in \(\R^d\) such that
    \[
    \int |f_n - f| \dif H \le \frac{1}{4^n}
    \quad \text{for every \(n \in \N\),}
    \]
    where \(f : \R^d \to \R\) satisfies \(\int |f| \dif H < \infty\). 
    Then, \(f_n \to f\) q.u.\@ and there exists \(F : \R^d \to [0, \infty]\) such that \(\int F \dif H < \infty\) and \(|f_n| \le F\) in \(\R^d\) for every \(n \in \N\).
\end{proposition}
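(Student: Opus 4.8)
The plan is to mimic the classical completeness proof for $L^1$: I would first use a Chebyshev-type inequality to turn the summable control $\int |f_n - f| \dif H \le 4^{-n}$ into an estimate on level sets, then build the exceptional set for quasi-uniform convergence as a tail union of ``bad'' sets, and finally exhibit an explicit dominating function built from the supremum of the discrepancies $|f_n - f|$.

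First I would record Chebyshev's inequality for the Choquet integral: for $a \ge 0$ and $\lambda > 0$, the monotonicity of the non-increasing map $s \mapsto H(\{a > s\})$ gives $\lambda H(\{a > \lambda\}) \le \int_0^\lambda H(\{a > s\}) \dif s \le \int a \dif H$, whence $H(\{a > \lambda\}) \le \frac{1}{\lambda} \int a \dif H$. Applying this to $a = |f_n - f|$ with $\lambda = 2^{-n}$ and using the hypothesis $\int |f_n - f| \dif H \le 4^{-n}$ yields $H(A_n) \le 2^{-n}$ for the sets $A_n \vcentcolon= \{|f_n - f| > 2^{-n}\}$. For the quasi-uniform convergence, given $\epsilon > 0$ I would choose $N$ so that $\sum_{n \ge N} 2^{-n} \le \epsilon$ and set $E \vcentcolon= \bigcup_{n \ge N} A_n$. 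Countable subadditivity gives $H(E) \le \sum_{n \ge N} 2^{-n} \le \epsilon$, while on $\R^d \setminus E$ one has $|f_n - f| \le 2^{-n}$ for every $n \ge N$; since $2^{-n} \to 0$ and the $f_n$ are real-valued, this is exactly uniform convergence of $(f_n)_{n \in \N}$ to $f$ on $\R^d \setminus E$, so $f_n \to f$ q.u.

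For the dominating function I would take $F \vcentcolon= |f| + \sup_{n \in \N} |f_n - f|$, which satisfies $|f_n| \le |f| + |f_n - f| \le F$ for every $n$. The main obstacle is bounding $\int F \dif H$: since strong subadditivity is not assumed, I cannot invoke countable sublinearity \eqref{Choquet_Fatou_2} of the integral and must instead estimate level sets directly. The key observation is that $\{\sup_n |f_n - f| > t\} = \bigcup_n \{|f_n - f| > t\}$, so countable subadditivity of $H$ together with Tonelli's theorem for the Lebesgue integral yields
\begin{align*}
\int \sup_{n} |f_n - f| \dif H
&= \int_0^\infty H\Bigl(\bigcup_n \{|f_n - f| > t\}\Bigr) \dif t\\
&\le \sum_n \int_0^\infty H(\{|f_n - f| > t\}) \dif t\\
&= \sum_n \int |f_n - f| \dif H
\le \sum_n 4^{-n} < \infty.
\end{align*}
Finally, from the inclusion $\{F > t\} \subset \{|f| > t/2\} \cup \{\sup_n |f_n - f| > t/2\}$ and finite subadditivity (a consequence of \ref{countable_subadditivity}) I would conclude, after integrating in $t$, that $\int F \dif H \le 2 \int |f| \dif H + 2 \int \sup_n |f_n - f| \dif H < \infty$, since $\int |f| \dif H < \infty$ by hypothesis. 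This gives both assertions of the proposition. The only genuinely nontrivial point is the displayed estimate: the ``right'' dominating object is $\sup_n |f_n - f|$ rather than $\sum_n |f_n - f|$, precisely because the level sets of a supremum are unions, which is what lets countable subadditivity of $H$ do the work of the (unavailable) sublinearity of the integral.
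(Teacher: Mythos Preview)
Your argument is correct, and the first half (quasi-uniform convergence via Chebyshev and tail unions) matches the paper's proof essentially line for line. The construction of the dominating function, however, is genuinely different. The paper takes \(F \vcentcolon= |f| + \sum_{n} |f_n - f|\); to bound its Choquet integral it iterates the quasi-sublinearity estimate \(\int |g+h|\dif H \le 2\int |g|\dif H + 2\int |h|\dif H\), picking up a factor \(2^{n+1}\) at the \(n\)th term, and then invokes the quasi-uniform Fatou lemma (Proposition~\ref{propositionUniformFatou}) to pass from the partial sums to the infinite series. Your choice \(F \vcentcolon= |f| + \sup_n |f_n - f|\) is more efficient: the identity \(\{\sup_n |f_n - f| > t\} = \bigcup_n \{|f_n - f| > t\}\) lets countable subadditivity of \(H\) act directly on level sets, so Tonelli for the Lebesgue integral gives the bound without any appeal to Proposition~\ref{propositionUniformFatou} and without the geometric blow-up in the constants. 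In short, the paper's route recycles an earlier result at the cost of a slightly heavier argument, while yours is self-contained and yields a smaller dominating function; both rely on exactly the stated hypotheses.
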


\begin{proof}
By monotonicity of \(H\) one has an analogue of Chebyshev's inequality, 
\begin{equation}
\label{eq653}
\frac{1}{2^n} H\bigl( \bigl\{|f_{n} - f| > 1/2^n \bigr\} \bigr)
 \le \int |f_{n} - f| \dif H
 \le \frac{1}{4^n}.
\end{equation}
For each \(k \in \N\), denoting
\[
A_k 
\vcentcolon= \bigcup_{n = k}^\infty \bigl\{|f_{n} - f| > 1/2^n \bigr\},
\]
then, by countable subadditivity of \(H\) and  \eqref{eq653},
\begin{equation}
\label{eq676}
H(A_k) 
\le \sum_{n = k}^{\infty}{H\bigl( \bigl\{|f_{n} - f| > 1/2^n \bigr\} \bigr)}
\le \sum_{n = k}^{\infty}{\frac{1}{2^{n}}}
\le \frac{1}{2^{k - 1}}.
\end{equation}

Since the sequence \((1/2^n)_{n \in \N}\) is summable, by the Weierstrass M-test the series \(\sum\limits_{n = k}^\infty{|f_{n} - f|}\) converges uniformly in \(\R^d \setminus A_k\) for every \(k \in \N\), and then so does the sequence \((f_n)_{n \in \N}\).
We deduce from \eqref{eq676} that \((f_n)_{n \in \N}\) converges quasi-uniformly to \(f\) in \(\R^d\).

To conclude, it suffices to verify that \(F \vcentcolon= |f| + \sum\limits_{n = 0}^\infty{|f_{n} - f|}\) has finite Choquet integral.
We first observe that, by quasi-sublinearity of the Choquet integral, one has that for every two functions \(g\) and \(h\),
\begin{equation}
\label{eqQuasiSublinearity}
\int |g + h| \dif H
\le 2 \int |g| \dif H + 2 \int |h| \dif H.
\end{equation}
Iterating this inequality, for every \(j \in \N\) one gets
\[
\int \sum_{n = 0}^j{|f_{n} - f|} \dif H
\le \sum_{n = 0}^j{ 2^{n + 1} \int |f_{n} - f| \dif H}
\le \sum_{n = 0}^j{\frac{1}{2^{n - 1}}}
\le 4.
\]
By quasi-sublinearity of the Choquet integral and quasi-uniform convergence of the series \(\sum\limits_{n = 0}^\infty{|f_{n} - f|}\), we deduce from Proposition~\ref{propositionUniformFatou} that
\[
\int F \dif H \le 2 \int |f| \dif H + 8 < \infty.
\qedhere
\]
\end{proof}

The need for \ref{countable_subadditivity} in the statement of Proposition~\ref{propositionDominatedConvergenceConverse} can be seen in the following

\begin{example}
    Take a sequence \((a_n)_{n \in \N}\) of distinct points in \(\R^d\) and, for each \(n \in \N\), let \(f_n(a_n) = 1\) and \(f_n(x) = 0\) for \(x \ne a_n\).
    If \(H\) is the capacity given by Example~\ref{exampleCapacityFiniteSets}, then \(\int |f_n - 0| \dif H = H(\{a_n\}) = 0\) for each \(n\) but \((f_n)_{n \in \N}\) does not converge q.u.\@ to \(0\) since, for every \(j \in \N\), we have \(H\bigl(\bigcup\limits_{n \ge j}\{a_n\}\bigr) = 1\).
\end{example}

\section{Approximation of quasicontinuous functions}
\label{sectionApproximation}

We investigate in this section the question of approximation of quasicontinuous functions with finite Choquet integral by sequences of continuous functions.

\begin{definition}
A function \(f : \R^d \to [-\infty, \infty]\) is quasicontinuous whenever, for each \(\epsilon > 0\), there exists an open set \(\omega \subset \R^d\) such that \(H(\omega) \le \epsilon\) and \(f|_{\R^d \setminus \omega}\) is finite and continuous in \(\R^d \setminus \omega\).
\end{definition}

For later use, we observe  that, for every \(t \in \R\),
\begin{equation}
    \label{eqQuasicontinuousOpen}
    \{f > t\} \cup \omega
    \quad \text{is an open set in \(\R^d\),}
\end{equation}
even though \(\{f > t\}\) itself need not be open.
Indeed, by continuity of \(f|_{\R^d \setminus \omega}\) the set \(\{f > t\} \setminus \omega\) is open in \(\R^d \setminus \omega\) with respect to the relative topology.
Hence, there exists an open set \(U\) in \(\R^d\) such that \(\{f > t\} \setminus \omega = U \setminus \omega\) and then
\[
\{f > t\} \cup \omega = U \cup \omega
\]
is open in \(\R^d\) as claimed.

We begin by approximating quasicontinuous functions by sequences of bounded continuous functions:

\begin{proposition}
    \label{propositionQuasicontinuousContinuousBounded}
    Suppose that $H$ satisfies \ref{monotone} and \ref{finite_subadditivity}.
    If \(f : \R^d \to [-\infty, \infty]\) is quasicontinuous and \(\int |f| \dif H < \infty\), then there exists a sequence \((f_n)_{n \in \N}\) of bounded continuous functions in \(\R^d\) such that
    \[
    \lim_{n \to \infty}{\int |f_n - f| \dif H}
    = 0.
    \]
\end{proposition}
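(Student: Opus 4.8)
The plan is to approximate the quasicontinuous function $f$ in two stages: first truncate to obtain bounded quasicontinuous functions, then replace each truncation by a genuinely continuous function using the quasicontinuity structure and the Tietze extension theorem. The key observation is that quasicontinuity gives us, off a set of small capacity, actual continuity of $f$, and the set where $f$ is large can be controlled in capacity by the finiteness of $\int |f| \dif H$ via a Chebyshev-type estimate.

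First I would define the truncations $f^{(k)} \vcentcolon= \max\{-k, \min\{f, k\}\}$ for $k \in \N$. Since $|f^{(k)} - f| \le |f| \cdot \chi_{\{|f| > k\}}$ and more precisely $\{|f^{(k)} - f| > t\} \subset \{|f| > k + t\}$ for $t > 0$, one checks that $\int |f^{(k)} - f| \dif H \to 0$ as $k \to \infty$, because this Choquet integral equals $\int_0^\infty H(\{|f| > k+t\}) \dif t = \int_k^\infty H(\{|f| > s\}) \dif s$, which is the tail of the convergent integral $\int |f| \dif H = \int_0^\infty H(\{|f| > s\}) \dif s$. Thus it suffices to approximate each bounded quasicontinuous $f^{(k)}$ by bounded continuous functions with small Choquet error, and then diagonalize.

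So the problem reduces to the bounded case: given a bounded quasicontinuous $g$ with $|g| \le k$, and given $\delta > 0$, I must produce a bounded continuous $h$ with $\int |g - h| \dif H \le \delta$. Here I would use quasicontinuity to select an open set $\omega$ with $H(\omega) \le \epsilon$ such that $g|_{\R^d \setminus \omega}$ is continuous. The closed set $\R^d \setminus \omega$ carries the continuous bounded function $g|_{\R^d \setminus \omega}$, so by the Tietze extension theorem there is a continuous function $h$ on all of $\R^d$ with $h = g$ on $\R^d \setminus \omega$ and $|h| \le k$. Then $\{|g - h| > t\} \subset \omega$ for every $t > 0$, whence by monotonicity $H(\{|g-h| > t\}) \le H(\omega) \le \epsilon$, while for $t \ge 2k$ the set is empty. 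Integrating gives $\int |g - h| \dif H \le 2k\epsilon$, which is as small as desired by choosing $\epsilon$ small.

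The main obstacle is verifying that Tietze's theorem applies cleanly: I need $\R^d \setminus \omega$ to be closed (immediate, since $\omega$ is open) and $g|_{\R^d \setminus \omega}$ to be a genuinely continuous, finite, bounded function on this closed set, which is exactly what the definition of quasicontinuity provides together with the truncation. A subtlety worth flagging is that the extension $h$ need not agree with $g$ on $\omega$, but this does not matter: the error $|g - h|$ is supported in $\omega$ and controlled purely by $H(\omega)$ and the uniform bound $2k$. Finally, to assemble the full result I would run a diagonal argument: choose $k_n \to \infty$ so that $\int |f^{(k_n)} - f| \dif H \le 1/2^{n+1}$, and for each $n$ pick a bounded continuous $f_n$ with $\int |f^{(k_n)} - f_n| \dif H \le 1/2^{n+1}$; then the quasi-sublinearity estimate \eqref{eqQuasiSublinearity} yields $\int |f_n - f| \dif H \le 2 \int |f_n - f^{(k_n)}| \dif H + 2\int |f^{(k_n)} - f| \dif H \to 0$, completing the proof.
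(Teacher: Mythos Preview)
Your proposal is correct and follows essentially the same approach as the paper: truncate via $T_k$ to reduce to the bounded case, then use quasicontinuity plus the Tietze extension theorem to replace the bounded quasicontinuous truncation by a bounded continuous function whose error is supported in a set of small capacity, and combine the two steps using quasi-sublinearity \eqref{eqQuasiSublinearity}. The paper's proof differs only in minor bookkeeping (it carries both parameters $k$ and $\epsilon$ simultaneously rather than first reducing to the bounded case), but the ideas and estimates are the same.
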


\begin{proof}
Suppose that $f$ is quasicontinuous and has finite Choquet integral.
We compose \(f\) with the truncation function 
$T_k : [-\infty, \infty] \to \R$ at height $k > 0$ defined by
\begin{equation}
    \label{eqTruncation}
T_k(t)
= \begin{cases}
k & \text{if \(t > k\),}\\
t & \text{if \(-k \le t \le k\),}\\
-k & \text{if \(t \le -k\).}
\end{cases}
\end{equation}
Then, the composition \(T_k(f)\) satisfies
\begin{align*}
\int |T_k(f) - f| \dif H  
= \int_0^\infty H \bigl( \{ |T_k(f) - f|>t\} \bigr) \dif t 
&= \int_0^\infty H( \{ |f|>k+t\}) \dif t \\
&= \int_k^\infty H( \{ |f|>s\}) \dif s.
\end{align*}
Since $ \int |f| \dif H < \infty$, the integral in the right-hand side tends to zero as $k \to \infty$.  
Next, by quasicontinuity of $f$, for every $\epsilon>0$ we may find an open set $\omega \subset \R^d$ such that $H(\omega) \le \epsilon$ and $f|_{\R^d \setminus \omega}$ is continuous on the closed set \(\R^d \setminus \omega\).  
Then, by composition, $T_k(f)|_{\R^d \setminus \omega}$ is also continuous.  
Thus, the Tietze Extention Theorem allows us to extend $T_k(f)|_{\R^d \setminus \omega}$ as a bounded continuous function $g_{k, \epsilon} : \R^d \to \R$ with \(|g_{k, \epsilon}| \le k\). 
We can therefore estimate using the quasi-sublinearity \eqref{eqQuasiSublinearity} of the Choquet integral,
\begin{align*}
\int |g_{k, \epsilon} - f| \dif H
\le 2 \int |g_{k, \epsilon} - T_k(f)| \dif H + 2 \int |T_k(f) - f| \dif H.
\end{align*}
Taking $n \in \N$, we can find $k = k_n > 0$ such that the second term is bounded by $1/(n+1)$.  
For the first term, since \(|g_{k_n, \epsilon} - T_{k_n}(f)|\) is bounded by \(2k_n\) and vanishes on \(\R^d \setminus \omega\), we have
\begin{align*}
\int |g_{k_n, \epsilon} - T_k(f)| \dif H 
= \int_0^{2k_n} H\bigl( \{ |g_{k_n, \epsilon} - T_{k_n}(f)|>t\} \bigr) \dif t 
\leq \int_0^{2k_n} H(\omega) \dif t
\leq 2k_n \epsilon.
\end{align*}
Thus,
\[
\int |g_{k_n, \epsilon} - f| \dif H 
\le 4 k_n \epsilon + \frac{2}{n+1} \epsilon.
\]
Choosing \(\epsilon = \epsilon_n > 0\) so that \(4 k_n \epsilon_n \le 1/(n+1)\), the right-hand side is less than or equal to \(3/(n + 1)\) and we have the conclusion with \(f_n \vcentcolon= g_{k_n, \epsilon_n}\).
\end{proof}

To obtain the approximation by continuous functions with compact support, one needs a vanishing property of the capacity at infinity:

\begin{mydescription}
\item [{\sc evanescence}]  \label{weak_continuity_from_above}  
For every open subset \(U \subset \R^d\) with \(H(U) < \infty\) and every closed subset \(F \subset U\),
    \begin{align*}
        \lim_{r \to \infty}{H(F \setminus B_r)} = 0. 
    \end{align*}
\end{mydescription}

Under this additional assumption, we prove

\begin{proposition}
    \label{propositionApproximationQuasicontinuousCompactSupport}
    Suppose that $H$ satisfies \ref{monotone}, \ref{finite_subadditivity}, and \ref{weak_continuity_from_above}.
    If \(f : \R^d \to [-\infty, \infty]\) is quasicontinuous and \(\int |f| \dif H < \infty\), then there exists a sequence \((\varphi_n)_{n \in \N}\) in \(C_c(\R^d)\) such that
    \[
    \lim_{n \to \infty}{\int |\varphi_n - f| \dif H}
    = 0.
    \]
\end{proposition}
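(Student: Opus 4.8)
The plan is to bootstrap from Proposition~\ref{propositionQuasicontinuousContinuousBounded} using a spatial truncation, the only genuinely new ingredient being the evanescence assumption \ref{weak_continuity_from_above}. First I would reduce the statement: by Proposition~\ref{propositionQuasicontinuousContinuousBounded} there is a sequence of bounded continuous functions approximating \(f\) in the Choquet integral, so by the quasi-sublinearity \eqref{eqQuasiSublinearity} together with a diagonal argument it suffices to approximate an arbitrary bounded continuous \(g\) with \(\int |g| \dif H < \infty\) by functions in \(C_c(\R^d)\). Such a \(g\) does have finite Choquet integral: applying \eqref{eqQuasiSublinearity} to the splitting \(g = (g - f) + f\) bounds \(\int |g| \dif H\) by a fixed multiple of \(\int |g - f| \dif H + \int |f| \dif H < \infty\).

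So fix a bounded continuous \(g\) with \(\int |g| \dif H < \infty\). For each \(r > 0\) I would choose a cutoff \(\chi_r \in C_c(\R^d)\) with \(0 \le \chi_r \le 1\), \(\chi_r \equiv 1\) on \(B_r\), and \(\supp \chi_r\) compact, and set \(\varphi_r \vcentcolon= g\, \chi_r \in C_c(\R^d)\). Then \(|g - \varphi_r| = |g|(1 - \chi_r) \le |g|\), and \(|g - \varphi_r|\) vanishes on \(B_r\); consequently, for every \(t > 0\),
\[
\{|g - \varphi_r| > t\} \subset \{|g| > t\} \setminus B_r \subset \{|g| \ge t\} \setminus B_r.
\]

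The crux is letting \(r \to \infty\). Since \(s \mapsto H(\{|g| > s\})\) is non-increasing with finite integral over \((0, \infty)\), it is finite for every \(s > 0\). Fixing \(t > 0\) and choosing \(s \in (0, t)\), the set \(U \vcentcolon= \{|g| > s\}\) is open with \(H(U) < \infty\) by continuity of \(g\), while \(F \vcentcolon= \{|g| \ge t\}\) is closed with \(F \subset U\). Evanescence \ref{weak_continuity_from_above} then gives \(H(F \setminus B_r) \to 0\), and so by monotonicity \(H(\{|g - \varphi_r| > t\}) \to 0\) as \(r \to \infty\), for each fixed \(t > 0\). Since also \(H(\{|g - \varphi_r| > t\}) \le H(\{|g| > t\})\), with the majorant integrable in \(t\) because \(\int |g| \dif H < \infty\), Lebesgue's dominated convergence theorem applied to the integral in \eqref{choquet_def} yields
\[
\int |g - \varphi_r| \dif H = \int_0^\infty H(\{|g - \varphi_r| > t\}) \dif t \longrightarrow 0 \quad \text{as } r \to \infty,
\]
which is the sought approximation of \(g\), and the reduction above then finishes the proof.

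The hard part will be exactly this last passage: evanescence supplies only the level-by-level decay \(H(\{|g| \ge t\} \setminus B_r) \to 0\) for each fixed \(t\), and one must upgrade this pointwise-in-\(t\) statement to convergence of the entire Choquet integral. This is where the finiteness \(H(\{|g| > s\}) < \infty\) (needed to legitimately invoke \ref{weak_continuity_from_above} with an open set \(U\) of finite capacity) and the integrable majorant \(H(\{|g| > t\})\) (needed for dominated convergence) both come into play; once these are secured, the assembly through \eqref{eqQuasiSublinearity} and the diagonal argument is routine.
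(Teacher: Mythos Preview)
Your proposal is correct and follows essentially the same approach as the paper: reduce via Proposition~\ref{propositionQuasicontinuousContinuousBounded} to a bounded continuous \(g\), multiply by a cutoff \(\chi_r\), observe the inclusion \(\{|g - g\chi_r| > t\} \subset \{|g| \ge t\} \setminus B_r\), and then use \ref{weak_continuity_from_above} to pass to the limit. The only difference is in the packaging of this last limit: the paper splits the \(t\)-integral at a level \(\eta\) and sends \(r \to \infty\) then \(\eta \to 0\) by hand, whereas you invoke Lebesgue's dominated convergence theorem in \(t\) directly with majorant \(H(\{|g| > t\})\); these are equivalent arguments, and yours is arguably a shade cleaner.
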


\begin{proof}
    By Proposition~\ref{propositionQuasicontinuousContinuousBounded} and quasi-sublinearity \eqref{eqQuasiSublinearity} of the Choquet integral, 
    we may assume that \(f\) is bounded and continuous.
    Given \(r > 0\), take \(\psi_r \in C_c(\R^d)\) such that \(0 \le \psi_r \le 1\) in \(\R^d\) and \(\psi_r = 1\) in \(B_r\).
    Since \(|f\psi_r - f| \le |f|\) in \(\R^d\) and \(|f\psi_r - f|= 0\) in \(B_r\), for every \(t > 0\) we have
    \begin{equation}
    \label{eq1305}
    \{|f\psi_r - f| > t\}
    \subset \{|f| \ge t\} \setminus B_r.
    \end{equation}
    By boundedness of \(f\), there exists \(M \ge 0\) such that \(|f| \le M\).
    For any \(0 < \eta \le M\) we then have
    \[
    \int |f\psi_r - f| \dif H
    = \int_0^M H( \{|f\psi_r - f| > t\}) \dif t
    = \int_0^\eta + \int_\eta^M H( \{|f\psi_r - f| > t\} ) \dif t.
    \]
    Using the monotonicity of \(H\) and \eqref{eq1305}, we estimate
    \[
    \int_0^\eta H( \{|f\psi_r - f| > t\}) \dif t
    \le \int_0^\eta H(\{|f| > t\}) \dif t
    \]
    and
    \[
    \int_\eta^M H( \{|f\psi_r - f| > t\} ) \dif t
    \le M H( \{|f\psi_r - f| > \eta\} )
    \le M H\bigl(\{|f| \ge \eta\} \setminus B_r \bigr).
    \]
    Thus,
    \begin{equation}
        \label{eq662}
    \int |f\psi_r - f| \dif H
    \le \int_0^\eta H(\{|f| > t\}) \dif t + M H\bigl(\{|f| \ge \eta\} \setminus B_r \bigr).
    \end{equation}
    Observe that \(\{|f| \ge \eta\}\) is closed and contained in the open set \(\{|f| > \eta\}\), which satisfies \(H(\{|f| > \eta\}) < \infty\) by the Chebyshev inequality.
    Thus, by \ref{weak_continuity_from_above}, the second term in the right-hand side of \eqref{eq662} converges to zero as \(r \to \infty\).
    Since \(\int |f| \dif H < \infty\), the first term in the right-hand side of \eqref{eq662} also tends to zero as \(\eta \to 0\).
    Therefore, if we let $r \to \infty$ and then $\eta \to 0$ in \eqref{eq662}, we obtain
    \begin{align*}
    \lim_{r \to \infty} \int |f\psi_r - f| \dif H =0,
    \end{align*}
    which implies the desired conclusion since \(f \psi_r \in C_c(\R^d)\).
\end{proof}

The assumption \ref{weak_continuity_from_above} is necessary for the density of functions with compact support.
Indeed,

\begin{proposition}
Suppose that \(H\) satisfies \ref{monotone}.
If every quasicontinuous function \(f : \R^d \to \R\) with \(\int |f| \dif H < \infty\) can be approximated in terms of the Choquet integral by a sequence in \(C_c(\R^d)\), then $H$ satisfies \ref{weak_continuity_from_above}.
\end{proposition}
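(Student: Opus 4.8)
The plan is to argue directly: fix an open set $U \subset \R^d$ with $H(U) < \infty$ and a closed set $F \subset U$, and manufacture a single continuous test function whose approximability in $C_c(\R^d)$ is only possible if $H(F \setminus B_r) \to 0$. Since $F$ and $C \vcentcolon= \R^d \setminus U$ are disjoint closed subsets of the metric space $\R^d$, Urysohn's lemma furnishes a continuous $f : \R^d \to [0,1]$ with $f = 1$ on $F$ and $f = 0$ on $C$; concretely (when $C \ne \emptyset$) one may take
\[
f(x) \vcentcolon= \frac{\operatorname{dist}(x, C)}{\operatorname{dist}(x, C) + \operatorname{dist}(x, F)},
\]
whose denominator never vanishes because $F \cap C = \emptyset$. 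Then $f$ is continuous, hence quasicontinuous, and since $\{f > t\} \subset U$ for every $t > 0$ while $\{f > t\} = \emptyset$ for $t \ge 1$, monotonicity \ref{monotone} gives $\int f \dif H = \int_0^1 H(\{f > t\}) \dif t \le H(U) < \infty$. Thus $f$ is an admissible function for the hypothesis.

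By assumption there is a sequence $(\varphi_n)_{n \in \N}$ in $C_c(\R^d)$ with $\int |\varphi_n - f| \dif H \to 0$. For each $n$ choose $R_n > 0$ with $\supp \varphi_n \subset B_{R_n}$. The key observation is that whenever $r \ge R_n$ one has $f = 1$ and $\varphi_n = 0$ on $F \setminus B_r$, so $|\varphi_n - f| = 1$ there, giving
\[
F \setminus B_r \subset \{|\varphi_n - f| > 1/2\}.
\]
Combining monotonicity \ref{monotone} with the Chebyshev-type inequality $H(\{|\varphi_n - f| > 1/2\}) \le 2 \int |\varphi_n - f| \dif H$ (which follows from \ref{monotone} exactly as in \eqref{eq653}), I obtain
\[
H(F \setminus B_r) \le 2 \int |\varphi_n - f| \dif H
\qquad \text{for every } r \ge R_n.
\]

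To conclude I would use that $r \mapsto H(F \setminus B_r)$ is non-increasing by \ref{monotone}, since $F \setminus B_r$ shrinks as $r$ grows; hence the limit $L \vcentcolon= \lim_{r \to \infty} H(F \setminus B_r)$ exists. Letting $r \to \infty$ in the displayed estimate, which is legitimate because for each fixed $n$ it holds for all sufficiently large $r$, yields $L \le 2 \int |\varphi_n - f| \dif H$ for every $n$, and then $n \to \infty$ forces $L = 0$, which is precisely \ref{weak_continuity_from_above}. The only genuinely delicate point is the order of the two limits: the supports $B_{R_n}$ need not exhaust $\R^d$, so one cannot extract decay along a prescribed sequence of radii; it is the monotonicity of $r \mapsto H(F \setminus B_r)$ that lets me pass to the limit in $r$ first, absorbing the unknown size of $R_n$, and only afterwards in $n$.
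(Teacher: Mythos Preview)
Your proof is correct and follows essentially the same approach as the paper: construct a continuous Urysohn-type function equal to \(1\) on \(F\) and vanishing outside \(U\), approximate it in \(C_c(\R^d)\), and exploit that any approximant vanishes outside a large ball while \(f = 1\) on \(F\), together with the monotonicity of \(r \mapsto H(F \setminus B_r)\). The only cosmetic difference is that the paper estimates \(H(F \setminus B_R)\) directly by \(\int |f_k - f|\chi_{F \setminus B_R} \dif H = H(F \setminus B_R)\) (constant \(1\)) rather than via Chebyshev at level \(1/2\) (constant \(2\)).
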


\begin{proof}
    Given an open set \(U \subset \R^d\) with \(H(U) < \infty\) and a closed subset \(F \subset U\), let \(f : \R^d \to \R\) be a continuous function supported in \(U\) such that \(f = 1\) on \(F\) and \(0 \le f \le 1\) in \(\R^d\).
    Then, by monotonicity of \(H\),
    \[
    \int |f| \dif H
    \le H(U) < \infty.
    \]
    By assumption, there exists a sequence \((f_n)_{n \in \N}\) in \(C_c(\R^d)\) such that \(\int |f_n - f| \dif H \to 0\).
    Given \(k \in \N\) to be chosen below, let \(R > 0\) be such that \(\supp{f_k} \subset B_R\).
    By monotonicity of \(H\),  we have
    \[
    \int |f_k - f| \dif H
    \ge \int |f_k - f| \chi_{F \setminus B_R} \dif H
    = \int \chi_{F \setminus B_R} \dif H
    = H(F \setminus B_R).
    \]
    Given \(\epsilon > 0\), take \(k \in \N\) so that the integral in the left-hand side is less than \(\epsilon\). 
    Then, for every \(r \ge R\), we have by monotonicity of \(H\),
    \[
    H(F \setminus B_r) \le H(F \setminus B_R) \le \int |f_k - f| \dif H
    \le \epsilon.
    \qedhere
    \]
\end{proof}

\section{Sublinearity of the Choquet integral}\label{proofs}

The proof of the sublinearity of the Choquet integral in the discrete case relies on the following minimization property:

\begin{lemma}
	\label{lemmaChoquet}
	Suppose that $H$ satisfies \ref{strong_subadditive}.
 Then, for every \(n \in \N_{*}\) and \(C_{1}, \dots, C_{n} \subset \R^d\), there exist \(D_{1}, \dots, D_{n - 1} \subset D_n\)
 such that 
 	\begin{equation*}
	\sum_{i = 1}^{n}{\chi_{D_{i}}}
	= \sum_{i = 1}^{n}{\chi_{C_{i}}}
	\quad
	\text{and}
	\quad
	\sum_{i = 1}^{n}{H(D_{i})}
	\le \sum_{i = 1}^{n}{H(C_{i})}.
	\end{equation*}
\end{lemma}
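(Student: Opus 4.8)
The plan is to argue by induction on $n$, taking as the target sets $D_1, \dots, D_n$ the superlevel sets of $g \vcentcolon= \sum_{i=1}^{n} \chi_{C_{i}}$. Note first that any nested family $D_1 \subset \cdots \subset D_n$ satisfying $\sum_{i=1}^{n} \chi_{D_i} = g$ is forced to be $D_i = \{g \ge n + 1 - i\}$, since for nested sets the value of $\sum_i \chi_{D_i}$ at a point counts how many of the $D_i$ contain it. Thus the constraint $\sum_i \chi_{D_i} = \sum_i \chi_{C_i}$ together with the nesting essentially pins down the $D_i$, and the entire content of the lemma is the inequality $\sum_{i=1}^{n} H(\{g \ge i\}) \le \sum_{i=1}^{n} H(C_i)$.

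The one elementary tool is the \emph{uncrossing} operation. For any two sets $E, F \subset \R^d$ one has the pointwise identity $\chi_{E \cap F} + \chi_{E \cup F} = \chi_E + \chi_F$, while \ref{strong_subadditive} gives $H(E \cap F) + H(E \cup F) \le H(E) + H(F)$. Hence replacing a pair $(E, F)$ in our list by $(E \cap F, E \cup F)$ leaves the sum of characteristic functions unchanged and does not increase the sum of the $H$-values.

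For the inductive step, assuming the statement for $n - 1$, I would start from $C_1, \dots, C_n$ and apply the uncrossing operation successively to the pairs (running union, next set): first to positions $(1,2)$, then to $(2,3)$, and so on up to $(n-1,n)$. After these $n - 1$ uncrossings the last set has become the total union $U \vcentcolon= \bigcup_{i=1}^{n} C_i$, and the resulting first $n - 1$ sets $C_1', \dots, C_{n-1}'$ all lie inside $U$ (the support $\bigcup_i C_i' \cup U$ of the preserved characteristic-function sum equals $U$, and the last set already equals $U$). Along the way $\sum_i \chi$ is preserved and $\sum_i H$ does not increase. Applying the induction hypothesis to $C_1', \dots, C_{n-1}'$ yields nested $D_1, \dots, D_{n-2} \subset D_{n-1}$ with $\sum_{i=1}^{n-1} \chi_{D_i} = \sum_{i=1}^{n-1} \chi_{C_i'}$ and $\sum_{i=1}^{n-1} H(D_i) \le \sum_{i=1}^{n-1} H(C_i')$. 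Setting $D_n \vcentcolon= U$, I would then check $\sum_{i=1}^{n} \chi_{D_i} = \sum_{i=1}^{n-1} \chi_{C_i'} + \chi_U = g$ and $\sum_{i=1}^{n} H(D_i) \le \sum_{i=1}^{n-1} H(C_i') + H(U) \le \sum_{i=1}^{n} H(C_i)$, while $\bigcup_{i < n} D_i = \bigcup_{i < n} C_i' \subset U = D_n$ gives the required inclusions $D_1, \dots, D_{n-1} \subset D_n$.

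The main obstacle is not any single computation but the termination issue lurking in the more symmetric strategy. One is tempted to uncross arbitrary incomparable pairs until the whole family is totally ordered by inclusion; that also produces the superlevel sets, but justifying that the process stops after finitely many steps requires a monovariant, which is awkward when the $C_i$ are arbitrary (possibly infinite) subsets of $\R^d$. Organizing the uncrossings instead as the explicit finite bubbling of the union $U$ to the top sidesteps the termination question entirely and keeps the bookkeeping of the inclusions and of the characteristic-function sums transparent, which is why I would carry out the induction in this form.
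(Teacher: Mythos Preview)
Your argument is correct and rests on the same two ingredients as the paper's proof: induction on $n$ together with the uncrossing step $\chi_{E\cap F}+\chi_{E\cup F}=\chi_E+\chi_F$ and \ref{strong_subadditive}. The only organizational difference is the order of operations: the paper first applies the induction hypothesis to $C_1,\dots,C_n$ (so that $\tilde D_n=\bigcup_{i\le n}C_i$ already) and then performs a \emph{single} uncrossing of $\tilde D_n$ with $C_{n+1}$, whereas you perform $n-1$ uncrossings up front to bubble the full union into the last slot and only then call the induction hypothesis on the remaining $n-1$ sets. Both routes are finite and yield the same conclusion; the paper's is a bit more economical (one uncrossing per inductive step), yours makes the appearance of $U=\bigcup_i C_i$ explicit.

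One small caveat on your opening paragraph: the lemma only requires $D_1,\dots,D_{n-1}\subset D_n$, not a full chain $D_1\subset\cdots\subset D_n$, so the $D_i$ are not ``forced'' to be the superlevel sets by the hypotheses alone. Your construction, like the paper's, in fact only verifies the weaker inclusion $D_i\subset D_n$; the upgrade to a fully nested family (and hence to the superlevel sets) is the content of the subsequent Proposition~\ref{algebraic}, which iterates this lemma.
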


From the identity between the characteristic functions, we have \(\bigcup\limits_{i = 1}^n{D_i} = \bigcup\limits_{i = 1}^n{C_i}\)
and, since \(D_n\) contains all sets \(D_i\), it then follows that
\(D_n = \bigcup\limits_{i = 1}^n{C_i}\).

\begin{proof}[Proof of Lemma~\ref{lemmaChoquet}]
	We proceed by induction on \(n\).{}
	For \(n = 1\), it suffices to take \(D_{1} = C_{1}\).{}
	We now suppose the statement is true for some \(n \in \N_{*}\).{}
 	Assume we are given sets \(C_{1}\), \dots, \(C_{n+1} \subset \R^d\), and apply the induction assumption to the first \(n\) sets \(C_{1}\), \dots, \(C_{n}\) to get \(\widetilde D_{1}\), \dots, \(\widetilde D_{n - 1} \subset \widetilde D_n\) that satisfy the conclusion.
    By strong subadditivity of \(H\), we have
 	\begin{equation*}
	H(D_{n} \cap C_{n+1}) + H(D_{n} \cup C_{n+1})
 	\le H(D_{n}) + H(C_{n+1}).
	\end{equation*}
    Let \(D_i \vcentcolon= \widetilde D_i\) for \(i \in \{1, \ldots, n-1\}\), \(D_n \vcentcolon= D_{n} \cap C_{n+1}\) and \(D_{n+1} \vcentcolon= D_{n} \cup C_{n+1}\).
 	Then,
 	\begin{align*}
 	\sum_{i = 1}^{n + 1}{H(D_{i})}
	& = \sum_{i = 1}^{n - 1}{H(\widetilde D_{i})} + H(\widetilde D_{n} \cap C_{n+1}) + H(\widetilde D_{n} \cup C_{n+1})\\
 	  & \le \sum_{i = 1}^{n - 1}{H(\widetilde D_{i})} + H(\widetilde D_{n}) + H(C_{n+1}) 
 	 = \sum_{i = 1}^{n}{H(\widetilde D_{i})} + H(C_{n+1}).
	\end{align*}
	By an application of the induction hypothesis, we then have
	\[{}
	\sum_{i = 1}^{n + 1}{H(D_{i})}
	\le \sum_{i = 1}^{n}{H(C_{i})} + H(C_{n+1})
 	= \sum_{i = 1}^{n + 1}{H(C_{i})}.
	\]
	We also observe that
	\[{}
	\chi_{\widetilde D_{n} \cap C_{n + 1}} + \chi_{\widetilde D_{n} \cup C_{n + 1}}
	= \chi_{\widetilde D_{n}} + \chi_{C_{n + 1}}.
	\]
	Since \(\sum\limits_{i = 1}^{n}{\chi_{\widetilde D_{i}}}
	= \sum\limits_{i = 1}^{n}{\chi_{C_{i}}}\), one sees that
	\[{}
    \sum_{i = 1}^{n + 1}{\chi_{D_{i}}}
	 = \sum_{i = 1}^{n - 1}{\chi_{\widetilde D_{i}}} + \chi_{\widetilde D_{n} \cap C_{n+1}} + \chi_{\widetilde D_{n} \cup C_{n+1}}
	 = \sum_{i = 1}^{n}{\chi_{\widetilde D_{i}}} + \chi_{C_{n + 1}}
	= \sum_{i = 1}^{n + 1}{\chi_{C_{i}}},
	\]
 as claimed.  Finally, it remains to verify that \(D_i \subset D_{n+1}\) for \(i \in \{1,\ldots,n\}\).  This follows from the choice of
    \(D_{n + 1} = \widetilde D_{n} \cup C_{n+1}\).
    Indeed, the first \(n - 1\) sets \(D_1, \ldots, D_{n-1}\) are contained in \(\widetilde D_{n}\), which is a subset of \(D_{n+1}\).
    To conclude, one notes that \(D_n = \widetilde D_{n} \cap C_{n+1}\), which is also contained in \(D_{n+1}\).
\end{proof}

The following proposition is claimed true by an induction argument at the top of p.~249 of \cite{Anger:1977}, while a detailed proof can be found on pp.~766--768 of \cite{STW}.
We rely on a different organization of the proof based on Lemma~\ref{lemmaChoquet}.

\begin{proposition}\label{algebraic}
	\label{discrete}
	Suppose that $H$ satisfies \ref{strong_subadditive}.
Then, for every \(C_{1}, \dots, C_{n} \subset \R^d\), the nested family of sets \(A_1 \subset \ldots \subset A_n \subset \R^d\) such that 
\[
\sum\limits_{i = 1}^n{\chi_{A_i}} = \sum\limits_{i = 1}^n{\chi_{C_i}}
\]
satisfies
	\[{}
	\sum_{i = 1}^{n}{H(A_i)}
	\le \sum_{i = 1}^{n}{H(C_{i})}.
	\]
\end{proposition}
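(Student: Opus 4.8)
The plan is to argue by induction on $n$, using Lemma~\ref{lemmaChoquet} to strip off the largest member of the nested family at each stage. The base case $n = 1$ is immediate, since then the only nested family is $A_1 = C_1$ and the inequality $H(A_1) \le H(C_1)$ holds trivially. For the inductive step I would assume the statement for families of $n - 1$ sets and, given $C_1, \dots, C_n \subset \R^d$, apply Lemma~\ref{lemmaChoquet} to produce $D_1, \dots, D_{n-1} \subset D_n$ with $\sum_{i=1}^n \chi_{D_i} = \sum_{i=1}^n \chi_{C_i}$ and $\sum_{i=1}^n H(D_i) \le \sum_{i=1}^n H(C_i)$. As recorded in the remark following the lemma, the largest set satisfies $D_n = \bigcup_{i=1}^n C_i$.

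The key observation is that the nested family is uniquely determined by the integer-valued function $g \vcentcolon= \sum_{i=1}^n \chi_{C_i}$ through its superlevel sets, namely $A_i = \{g \ge n - i + 1\}$. In particular the top set is $A_n = \{g \ge 1\} = \bigcup_{i=1}^n C_i = D_n$, and hence $\sum_{i=1}^{n-1} \chi_{A_i} = g - \chi_{A_n} = \sum_{i=1}^{n-1} \chi_{D_i}$. Since $A_1 \subset \dots \subset A_{n-1}$ inherit the nesting from the full chain and carry the same indicator sum as the sets $D_1, \dots, D_{n-1}$, they are exactly the nested family associated to $D_1, \dots, D_{n-1}$.

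I can therefore invoke the induction hypothesis on $D_1, \dots, D_{n-1}$ to obtain $\sum_{i=1}^{n-1} H(A_i) \le \sum_{i=1}^{n-1} H(D_i)$. Adding $H(A_n) = H(D_n)$ to both sides and using the capacity estimate from the lemma yields
\[
\sum_{i = 1}^{n}{H(A_i)}
\le \sum_{i = 1}^{n - 1}{H(D_i)} + H(D_n)
= \sum_{i = 1}^{n}{H(D_i)}
\le \sum_{i = 1}^{n}{H(C_{i})},
\]
which closes the induction.

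The step I expect to require the most care is the reduction: one must verify that deleting the top set $A_n$ from the nested chain leaves precisely the nested rearrangement of the sets $D_1, \dots, D_{n-1}$ furnished by the lemma, so that the induction hypothesis genuinely applies at level $n-1$. This rests on the uniqueness of the nested family with a prescribed indicator sum, combined with the identity $A_n = D_n = \bigcup_{i=1}^n C_i$; note that strong subadditivity of $H$ is never used directly here, entering only through Lemma~\ref{lemmaChoquet}.
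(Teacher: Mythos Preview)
Your proposal is correct and follows essentially the same approach as the paper: induction on $n$, invoking Lemma~\ref{lemmaChoquet} to produce $D_1,\dots,D_{n-1}\subset D_n$ with the same indicator sum and smaller total capacity, then peeling off the top set $A_n=D_n=\bigcup_i C_i$ and applying the induction hypothesis to the remaining $n-1$ sets. The only cosmetic difference is that the paper \emph{constructs} the nested family through the induction (defining $A_n\vcentcolon= D_n$ and obtaining $A_1,\dots,A_{n-1}$ from the inductive step applied to $D_1,\dots,D_{n-1}$), whereas you start from the given nested family and use the superlevel-set description to identify it with the one arising inductively; since the nested family with prescribed indicator sum is unique, the two viewpoints coincide.
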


For every \(i \in \{1, \ldots, n\}\), the set \(A_i\) is the collection of points that belong to at least \(n - i + 1\) sets among \(C_{1}, \dots, C_{n}\).
Note that \(A_i\) can be also identified as the superlevel set \(\{a \ge i\}\) of the function \(a \vcentcolon= \sum\limits_{i = 1}^{n}{\chi_{C_{i}}}\).

\begin{proof}[Proof of Proposition~\ref{algebraic}]
	  We proceed by induction on the number of sets \(C_{1}, \dots, C_{n}\).
    The conclusion is trivially true for \(n = 1\) since in this case \(A_1 = C_1\).
    We now assume that \(n \ge 2\) and that the statement holds for any family of \(n - 1\) sets.
    Then, given \(C_1, \ldots, C_n \subset \R^d\), take 
    \begin{equation}
    \label{eqChoquet-813}
    D_1, \ldots, D_{n-1} \subset D_n
    \end{equation} 
    given by Lemma~\ref{lemmaChoquet}, 
    so that
	\begin{equation}
	\label{eqChoquet-443}
	\sum_{i = 1}^{n }{\chi_{D_i}} 
    =  
	\sum_{i = 1}^{n}{\chi_{C_i}}
	\quad \text{and} \quad
	\sum_{i = 1}^{n }{H(D_i)}  
	\le  \sum_{i = 1}^{n}{H(C_{i})}.
	\end{equation}
    By the induction assumption applied to \(D_1, \ldots, D_{n-1}\), we have a family of nested sets \(A_1 \subset \ldots \subset A_{n - 1} \subset \R^d\) such that
    \begin{equation}
    \label{eqChoquet-840}
    \sum\limits_{i = 1}^{n - 1}{\chi_{A_{i}}} = \sum\limits_{i = 1}^{n - 1}{\chi_{D_{i}}}
    \quad \text{and} \quad
	\sum_{i = 1}^{n - 1}{H(A_i)}
	\le \sum_{i = 1}^{n - 1}{H(D_i)}.
 \end{equation}
    In particular, \(A_{n-1} = \bigcup\limits_{i = 1}^{n-1}{D_i}\).
    Define \(A_n \vcentcolon= D_n\).
    From \eqref{eqChoquet-813}, we deduce that
    \(A_1 \subset \ldots \subset A_{n - 1} \subset A_n\) are nested.
    Moreover, by \eqref{eqChoquet-443} and \eqref{eqChoquet-840}, they satisfy
    \[
    \sum_{i = 1}^{n}{\chi_{A_i}}
    = \sum_{i = 1}^n{\chi_{D_i}}
    = \sum_{i = 1}^{n }{\chi_{C_i}}
    \]  
    and also
    \[
    \sum_{i = 1}^{n}{H(A_i)}
    \le
    \sum_{i = 1}^{n}{H(D_i)}  
	\le  \sum_{i = 1}^{n}{H(C_{i})}.
    \]
    By induction, the conclusion then follows.
\end{proof}

From this result, sublinearity of the Choquet integral where the functions in consideration have values in a discrete set follows easily:

\begin{corollary}
\label{corollaryChoquetDiscrete}
Suppose that \(H\) satisfies \ref{monotone} and \ref{strong_subadditive}.
If $f,g: \mathbb{R}^d \to \mathbb{N}/k$ for some $k \in \mathbb{N}_{*}$, then
\begin{equation}
	\label{eqChoquet-490}
\int (f+g)\dif H \leq \int f\dif H+ \int g\dif H.
\end{equation}
\end{corollary}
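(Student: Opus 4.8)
The plan is to reduce the inequality to the discrete combinatorial statement already established in Proposition~\ref{algebraic}. Since $f, g$ take values in $\N/k$, I first rescale by observing that $\int (f+g)\dif H = \frac{1}{k}\int (kf + kg)\dif H$ by the positive $1$-homogeneity of the Choquet integral (which follows directly from the definition \eqref{choquet_def} and a change of variables in the Lebesgue integral). Thus it suffices to treat the case $k = 1$, that is, to prove the inequality for $\N$-valued functions. From now on assume $f, g : \R^d \to \N$.

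The key observation is that any $\N$-valued function can be written as a sum of characteristic functions of its superlevel sets. Writing $h \vcentcolon= f + g$, for each $i \in \N_*$ set $C_i \vcentcolon= \{f \ge i\}$ when $i$ is in the ``$f$-range'' and similarly for $g$, so that the combined list $C_1, \dots, C_n$ consists of all superlevel sets $\{f \ge 1\}, \{f \ge 2\}, \dots$ together with $\{g \ge 1\}, \{g \ge 2\}, \dots$ (truncating at a large enough level, which is legitimate on any fixed region since at the level of the integral one integrates a non-increasing function over $(0,\infty)$). With this choice,
\[
\sum_i \chi_{C_i} = f + g = h \quad \text{pointwise.}
\]
Now Proposition~\ref{algebraic} produces the nested family $A_1 \subset \dots \subset A_n$ with $\sum_i \chi_{A_i} = \sum_i \chi_{C_i} = h$ and $\sum_i H(A_i) \le \sum_i H(C_i)$. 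But by the remark following Proposition~\ref{algebraic}, the nested sets $A_i$ are precisely the superlevel sets of $h$, namely $A_i = \{h \ge i\}$. Hence, using that for $\N$-valued functions the Choquet integral is exactly a sum over integer levels,
\[
\int h \dif H = \sum_{i \ge 1} H(\{h \ge i\}) = \sum_i H(A_i) \le \sum_i H(C_i) = \int f \dif H + \int g \dif H,
\]
where the last equality again uses that $\int f \dif H = \sum_{i\ge 1} H(\{f \ge i\})$ and similarly for $g$.

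The main technical point to handle carefully is the identification $\int h \dif H = \sum_{i \ge 1} H(\{h \ge i\})$ for $\N$-valued $h$: since $t \mapsto H(\{h > t\})$ is constant on each interval $[i, i+1)$ and equals $H(\{h \ge i+1\})$ there, the Lebesgue integral in \eqref{choquet_def} telescopes into the stated sum (recalling that $\{h > t\}$ may be replaced by $\{h \ge t\}$ without changing the integral). I expect this bookkeeping, together with the truncation needed to keep the list of sets finite so that Proposition~\ref{algebraic} applies, to be the only delicate step; everything else is a direct application of the already-proved combinatorial inequality and the elementary properties of the Lebesgue integral. Monotonicity \ref{monotone} is used implicitly to guarantee that the function $t \mapsto H(\{h > t\})$ is non-increasing, so that the integral in \eqref{choquet_def} is well defined.
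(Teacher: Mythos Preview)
Your approach is essentially the paper's: rescale to \(\N\)-valued functions by \(1\)-homogeneity, write \(f = \sum_{i \ge 1}\chi_{\{f \ge i\}}\) with \(\int f \dif H = \sum_{i \ge 1} H(\{f \ge i\})\), feed the combined list of superlevel sets of \(f\) and \(g\) into Proposition~\ref{algebraic}, and identify the resulting nested family with the superlevel sets of \(f+g\). For bounded \(f,g\) your argument is complete and matches the paper line for line.

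The one genuine gap is the unbounded case. Your parenthetical ``truncating at a large enough level, which is legitimate on any fixed region'' is not an argument: if you truncate \(f\) and \(g\) separately at height \(n\), the sum \(T_n(f)+T_n(g)\) does \emph{not} have the same superlevel sets as \(f+g\), so Proposition~\ref{algebraic} does not directly give information about \(\{f+g \ge i\}\). The paper closes this by first applying the bounded case to \(T_n(f)\) and \(T_n(g)\), then using the pointwise inequality \(T_n(f+g) \le T_n(f)+T_n(g)\) together with monotonicity of the Choquet integral to get \(\int T_n(f+g)\dif H \le \int f\dif H + \int g\dif H\), and finally passing to the limit \(n\to\infty\) via Fatou's lemma for the Lebesgue integral on the distribution function in~\eqref{choquet_def}. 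You correctly flagged this as the delicate step; it simply needs to be written out.
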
 

\begin{proof}
By positive $1$-homogeneity of the Choquet integral we may assume that \(f\) and \(g\) take values in $\mathbb{N}$.
Let us first assume they are both bounded from above by \(n\), in which case
\[
f = \sum_{i = 1}^{n}{\chi_{\{f \ge i\}}}
\quad \text{and} \quad
\int f \dif H
= \sum_{i = 1}^{n}{H(\{f \ge i\})},
\]
with analogous identities for \(g\) and \(f + g\); note that this last function is bounded from above by \(2n\).
Since 
\[
\sum_{i=1}^{2n}{\chi_{\{f + g \ge i\}}}
= 
f+g = \sum_{i=1}^n{\chi_{\{f \ge i\}}} + \sum_{i=1}^n{\chi_{\{g \ge i\}}},
\]
and the sets \(\{f + g \ge i\}\) for \(i \in \{1, \ldots, 2n\}\) are nested,
we may apply Proposition~\ref{discrete} with sets \(\{f \ge i\}\) and \(\{g \ge i\}\) for \(i \in \{1, \ldots, n\}\) to get
\[{}
\sum_{i=1}^{2n}{H(\{f + g \ge i\})} 
\le \sum_{i=1}^{n}{H(\{f \ge i\})} + \sum_{i=1}^{n}{H(\{g \ge i\})}.
\]
From the identities verified by the Choquet integrals of \(f+g\), \(f\) and \(g\), this inequality is precisely \eqref{eqChoquet-490}.

To handle the case where $f$ or $g$ is unbounded, we compose them with the truncation function $T_n$ at height $n \in \mathbb{N}$ defined by
\eqref{eqTruncation}.
As $T_n(f)$ and $T_n(g)$ are both bounded and have values in $\mathbb{N}$, we can apply the sublinearity of the Choquet integral we already proved in this setting to deduce that
\[
\int \bigl(T_n(f) + T_n(g)\bigr) \dif H 
\leq \int T_n(f) \dif H + \int T_n(g) \dif H.
\]
Since \(T_{n}(f) \le f\) and \(T_{n}(g) \le g\), by monotonicity of the Choquet integral we get
\[
\int \bigl(T_n(f) + T_n(g)\bigr) \dif H 
\leq \int f \dif H + \int g \dif H.
\]
We also note that \(T_n(f+g) \leq T_n(f) + T_n(g)\), which again by monotonicity of the Choquet integral gives
\begin{equation}
\label{eqChoquet-537}
\int T_n(f+ g) \dif H 
\leq \int f \dif H + \int g \dif H.
\end{equation}
Since
\[{}
\int_0^n H(\{ f+g>t\}) \dif t 
= \int T_n(f+g) \dif H,
\]
it follows from Fatou's lemma for the Lebesgue integral that
\begin{equation}
\label{eqChoquet-548}
\int (f + g) \dif H
= \int_0^\infty H(\{ f+g>t\}) \dif t 
\le \liminf_{n \to \infty}{\int T_n(f+g) \dif H}.
\end{equation}
It now suffices to combine \eqref{eqChoquet-537} and \eqref{eqChoquet-548}.
\end{proof}

\begin{proof}[Proof of Theorem \ref{sublinear}]
Given \(E, F \subset \R^d\), we have
\[
H(E \cap F) + H(E \cup F)
=\int (\chi_E + \chi_F) \dif H.
\]
Thus, if the Choquet integral is sublinear, then we have
\[
H(E \cap F) + H(E \cup F)
\le \int \chi_E \dif H + \int \chi_F \dif H
= H(E) + H(F).
\]
Hence,  \(H\) is strongly subadditive.

To prove the converse, we now assume that \(H\) is strongly subadditive.
Denote by $\lfloor \alpha \rfloor$ the integer part of the real number \(\alpha\).
Let $f,g:\mathbb{R}^d \to [0,\infty]$ and \(k \in \N_{*}\).
An application of Corollary~\ref{corollaryChoquetDiscrete} to the \(\N/k\)-valued functions $\lfloor k f\rfloor /k$ and $\lfloor k g \rfloor/k$ gives
\[{}
\int \biggl( \frac{\lfloor k f\rfloor}{k} + \frac{\lfloor k g \rfloor}{k} \biggr) \dif H 
 \leq \int \frac{\lfloor k f\rfloor}{k} \dif H + \int \frac{\lfloor k g \rfloor}{k} \dif H.
\]
Since $\lfloor k f\rfloor /k \le f$ and $\lfloor k g \rfloor/k \le g$, by monotonicity of the Choquet integral we then have
\begin{equation}
\label{eqChoquet-562}
\int \biggl( \frac{\lfloor k f\rfloor}{k} + \frac{\lfloor k g \rfloor}{k} \biggr) \dif H 
 \leq \int f \dif H +\int g \dif H.
\end{equation}
Since \(
k \alpha - 1 \leq \lfloor k \alpha \rfloor\) for every \(\alpha \ge 0\), we have
\begin{align*}
f+g -\frac{2}{k}\leq \frac{\lfloor k f\rfloor}{k} + \frac{\lfloor k g \rfloor}{k},
\end{align*}
which implies
\begin{align*}
\biggl(f+g -\frac{2}{k}\biggr)^{+} 
\le \frac{\lfloor k f\rfloor}{k} + \frac{\lfloor k g \rfloor}{k}.
\end{align*}
By definition and monotonicity of the Choquet integral,
\begin{equation}
\label{eqChoquet-586}
\int_{2/k}^\infty H(\{ f+g>t\}) \dif t 
 = 
\int \biggl(f+g -\frac{2}{k}\bigg)^{+} \dif H 
 \leq \int \biggl( \frac{\lfloor k f\rfloor}{k} + \frac{\lfloor k g \rfloor}{k} \biggr) \dif H.
\end{equation}
Hence, by \eqref{eqChoquet-562} and \eqref{eqChoquet-586},
\[{}
\int_{2/k}^\infty H(\{ f+g>t\}) \dif t 
\le \int f \dif H +\int g \dif H.
\]
An application of Fatou's lemma for the Lebesgue integral then yields
\[{}
\int (f + g) \dif H
\le \liminf_{k \to \infty}{\int_{2/k}^\infty H(\{ f+g>t\}) \dif t}
\le \int f \dif H +\int g \dif H.
 \qedhere
\]
\end{proof}

As a consequence of Proposition~\ref{propositionUniformFatou}, one gets countable sublinearity for the Choquet integral for series of functions that converge quasi-uniformly.
Note that countable subadditivity of \(H\) is not necessary in this case.

\begin{corollary}
    \label{propositionUniformTriangleInequalityCountable}
    Suppose that \(H\) satisfies \ref{monotone} and \ref{strong_subadditive}.
    If \((f_n)_{n \in \N}\) is a sequence of real-valued functions in \(\R^d\) such that \(\Bigl(  \sum\limits_{n = 0}^k f_n\Bigr)_{k \in \N}\) converges q.u.\@ to \(F : \R^d \to \R\), then
    \[
    \int |F| \dif H
    \le \sum_{n = 0}^{\infty}{\int |f_n| \dif H}.
    \]
\end{corollary}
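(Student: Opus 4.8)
The plan is to combine the quasi-uniform Fatou lemma of Proposition~\ref{propositionUniformFatou} with the finite sublinearity furnished by Theorem~\ref{sublinear}. Before doing so, I would record two elementary observations that make these results applicable. First, since \(H\) takes values in \([0, \infty]\), strong subadditivity forces finite subadditivity: for any \(E, F \subset \R^d\) one has \(H(E \cup F) \le H(E) + H(F) - H(E \cap F) \le H(E) + H(F)\), using \(H(E \cap F) \ge 0\). Thus both Proposition~\ref{propositionUniformFatou} and Theorem~\ref{sublinear} are available under the stated hypotheses. Second, writing \(S_k \vcentcolon= \sum_{n = 0}^k f_n\), the hypothesis \(S_k \to F\) q.u.\ implies \(|S_k| \to |F|\) q.u., because \(\bigl| |S_k| - |F| \bigr| \le |S_k - F|\) shows that uniform convergence off a set of small \(H\)-value is preserved under taking absolute values.

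With these in hand, I would apply Proposition~\ref{propositionUniformFatou} to the sequence of nonnegative functions \((|S_k|)_{k \in \N}\), which converges q.u.\ to \(|F|\), to obtain
\[
\int |F| \dif H \le \liminf_{k \to \infty} \int |S_k| \dif H.
\]
It then remains to estimate each term on the right. Since \(|S_k| = \bigl| \sum_{n = 0}^k f_n \bigr| \le \sum_{n = 0}^k |f_n|\) pointwise, monotonicity of the Choquet integral together with the sublinearity granted by Theorem~\ref{sublinear}, iterated over the finitely many summands, yields
\[
\int |S_k| \dif H \le \int \sum_{n = 0}^k |f_n| \dif H \le \sum_{n = 0}^k \int |f_n| \dif H \le \sum_{n = 0}^\infty \int |f_n| \dif H.
\]
Because the final bound is independent of \(k\), passing to the \(\liminf\) gives the claimed inequality.

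I expect no serious obstacle here; the content lies entirely in arranging the two earlier results. The only points requiring a moment's care are the two preliminary observations: that nonnegativity upgrades strong subadditivity to the finite subadditivity needed by Proposition~\ref{propositionUniformFatou}, and that quasi-uniform convergence is stable under \(|\cdot|\), so that Fatou applies to \(|S_k| \to |F|\). It is worth emphasizing that the argument invokes only the finite sublinearity of \(H\) and never \ref{countable_subadditivity}, which is precisely the point highlighted in the remark preceding the statement.
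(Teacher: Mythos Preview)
Your proof is correct and follows essentially the same route as the paper: bound \(\int |S_k| \dif H\) by \(\sum_{n=0}^k \int |f_n| \dif H\) via monotonicity and finite sublinearity (Theorem~\ref{sublinear}), then pass to the limit using Proposition~\ref{propositionUniformFatou}. Your two preliminary observations (strong subadditivity implies finite subadditivity, and q.u.\ convergence passes to absolute values) make explicit what the paper leaves implicit, but the argument is the same.
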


\begin{proof}
By monotonicity and sublinearity of the Choquet integral, for every \(k \in \N\) we have
\[
\int \Bigl| \sum_{n = 0}^k{f_n} \Bigr| \dif H
\le \int \sum_{n = 0}^k |f_n| \dif H
\le \sum_{n = 0}^k \int  |f_n| \dif H.
\]
By Proposition~\ref{propositionUniformFatou}, we have the conclusion as \(k \to \infty\).
\end{proof}

\begin{remark}
The statements above and their proofs in this section apply without change to a set function \(H\) defined on an algebra \(\mathcal{X}\) associated to a set \(X\), that is, \(\mathcal{X}\) is a family of subsets of \(X\) such that \(\emptyset \in \mathcal{X}\) and
\[{}
A\cap B, A \cup B \in \mathcal{X} \quad \text{for every \(A, B \in \mathcal{X}\).}
\]
More generally, one expects that results concerning the Choquet integral stated for capacities satisfying \ref{finite_subadditivity} hold in the setting of an algebra, provided one works with the class of $H$-capacitable functions, i.e.\@ those for which their upper-level sets are elements in this algebra.
\end{remark}

\section{Lower semicontinuity of the Choquet integral}\label{lowersemicontinuity}

To obtain a Fatou's lemma without quasi-uniform convergence, we restrict ourselves to the setting of quasicontinuous functions.
The main tool is based on the following standard consequence of the Hahn-Banach Theorem:

\begin{proposition}
    \label{propositionHahnBanach}
    Suppose that $H$ satisfies \ref{monotone}, \ref{strong_subadditive}, and \ref{weak_continuity_from_above}.
    If \(f : \R^d \to [0, \infty]\) is a quasicontinuous function such that \begin{equation}
    \label{eqLevelSets}
    H(\{f > t\}) < \infty
    \quad \text{for every \(t > 0\),}
    \end{equation}
    then
	\[{} 
	\int f \dif H
	= \sup{\left\{\int f \dif \mu
	\left|
	\begin{alignedat}{2}
	& \text{ \(\mu \ge 0\) is a locally finite Borel measure,}\\
	& \text{ \(\mu \le H\) on open subsets of \(\R^d\)}
	\end{alignedat}
	\right.
	\right\}}.
	\]
\end{proposition}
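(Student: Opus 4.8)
The plan is to establish the identity by proving two inequalities. The inequality $\int f \dif \mu \le \int f \dif H$ for every admissible $\mu$ is the easy direction, and it gives $\sup \le \int f \dif H$ essentially for free. The substantive content is the reverse inequality, namely producing, for each prescribed level of accuracy, a single locally finite Borel measure $\mu \le H$ on open sets whose Lebesgue integral against $f$ nearly recovers $\int f \dif H$. For this I would invoke the Hahn--Banach Theorem, which is why strong subadditivity enters: by Theorem~\ref{sublinear}, strong subadditivity makes the Choquet integral sublinear, so $p(g) \vcentcolon= \int g \dif H$ is a sublinear functional on a suitable space of functions, and linear functionals dominated by $p$ are exactly what Hahn--Banach delivers.

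First I would check the trivial inequality: if $\mu \ge 0$ is locally finite Borel with $\mu \le H$ on open sets, then by the layer-cake representation $\int f \dif \mu = \int_0^\infty \mu(\{f > t\}) \dif t$, and I would use that the level sets $\{f > t\}$ can be compared with open sets via quasicontinuity. Specifically, since $f$ is quasicontinuous, for each $\epsilon$ there is an open $\omega$ with $H(\omega) \le \epsilon$ such that $\{f > t\} \cup \omega$ is open, as recorded in \eqref{eqQuasicontinuousOpen}; this lets me bound $\mu(\{f > t\}) \le \mu(\{f > t\} \cup \omega) \le H(\{f > t\} \cup \omega) \le H(\{f > t\}) + \epsilon$ using finite subadditivity, and integrating in $t$ and sending $\epsilon \to 0$ yields $\int f \dif \mu \le \int f \dif H$.

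For the hard direction I would work on the space $C_c(\R^d)$, where the Choquet integral is already known to be a well-behaved sublinear functional, and where any positive linear functional is, by the Riesz Representation Theorem, integration against a locally finite Borel measure. The strategy is: by Proposition~\ref{propositionApproximationQuasicontinuousCompactSupport}, approximate $f$ in Choquet integral by functions in $C_c(\R^d)$, reducing matters to constructing, for a fixed $\varphi \in C_c(\R^d)$, a measure $\mu \le H$ with $\int \varphi \dif \mu$ close to $\int \varphi \dif H$; then apply Hahn--Banach on the subspace spanned by $\varphi$ (where one defines the target linear functional to equal $\int \varphi \dif H$), extend it to all of $C_c(\R^d)$ dominated by the sublinear functional $g \mapsto \int g \dif H$, observe positivity follows from monotonicity of the Choquet integral, and represent the extension by a measure $\mu$. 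The domination by $\int \cdot \dif H$ is precisely the condition $\mu \le H$ on open sets, which I would verify by testing against continuous functions approximating $\chi_U$ from below and using inner approximation of open sets by compactly supported functions.

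The main obstacle will be the last verification, that the measure $\mu$ furnished abstractly by Hahn--Banach and Riesz genuinely satisfies $\mu \le H$ on \emph{all} open subsets, not merely on the supports of the test functions used. This is where the hypothesis \ref{weak_continuity_from_above} is needed: to pass from the inequality $\int \psi \dif \mu \le \int \psi \dif H$ for $\psi \in C_c(\R^d)$ to the set-level inequality $\mu(U) \le H(U)$, I would take an increasing sequence $\psi_j \nearrow \chi_U$ of compactly supported functions, use the monotone convergence theorem for the genuine measure $\mu$ on the left, and control the right-hand side using that $\int \psi_j \dif H \le H(U)$ by monotonicity; \ref{weak_continuity_from_above} guarantees the approximation does not lose mass near infinity when $H(U) < \infty$, so that no spurious defect appears in the comparison, closing the argument.
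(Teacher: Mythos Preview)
Your overall architecture matches the paper's: the easy inequality via quasicontinuity and \eqref{eqQuasicontinuousOpen} is exactly Lemma~\ref{lemmaHahnBanachInequality}, and the hard direction proceeds by Hahn--Banach on $C_c(\R^d)$ with the sublinear functional $P(\psi) = \int \psi^+ \dif H$, Riesz representation of the resulting positive functional as a Radon measure, verification that $\mu \le H$ on open sets, and closure by approximating $f$ with compactly supported continuous functions via Proposition~\ref{propositionApproximationQuasicontinuousCompactSupport}. Two points deserve correction.

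First, a genuine gap: for Hahn--Banach to apply on $C_c(\R^d)$ you need the sublinear functional $P$ to be finite-valued there, but the hypotheses do not include local finiteness of $H$ --- one may well have $H(B) = \infty$ for some ball $B$, making $\int \psi^+ \dif H = \infty$ for suitable $\psi \in C_c(\R^d)$. The paper resolves this by first contracting $H$ to $H_\eta(A) \vcentcolon= H(A \cap \{f > \eta\})$, which is bounded by $H(\{f > \eta\}) < \infty$ thanks to \eqref{eqLevelSets} and inherits monotonicity and strong subadditivity. One then runs Hahn--Banach and Riesz with $H_\eta$ in place of $H$, obtaining $\mu \le H_\eta \le H$ on open sets with $\int f \dif H_\eta \le \sup_{\mu \le H} \int f \dif \mu$; since $\int f \dif H_\eta \ge \int_\eta^\infty H(\{f > t\}) \dif t$, letting $\eta \to 0$ recovers $\int f \dif H$ via Fatou for the Lebesgue integral. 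You skip this reduction entirely.

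Second, a misattribution: you locate the use of \ref{weak_continuity_from_above} in the verification $\mu(U) \le H(U)$, but your own argument there already works without it --- take $\psi_j \nearrow \chi_U$, apply monotone convergence for the genuine measure $\mu$ on the left, and use monotonicity $\int \psi_j \dif H \le H(U)$ on the right. The paper does this even more simply by inner regularity of the Radon measure $\mu$: for compact $K \subset U$ and $\chi_K \le \psi \le \chi_U$ in $C_c(\R^d)$, one has $\mu(K) \le \int \psi \dif\mu \le \int \psi \dif H \le H(U)$, and then takes the supremum over $K$. The only place \ref{weak_continuity_from_above} actually enters is inside Proposition~\ref{propositionApproximationQuasicontinuousCompactSupport}, which you already invoke for the approximation step. (A minor related slip: in your easy direction, integrating $H(\{f > t\}) + \epsilon$ over all of $(0,\infty)$ gives $+\infty$; one must truncate to $(0,k)$, send $\epsilon \to 0$, then $k \to \infty$, as in the proof of Lemma~\ref{lemmaHahnBanachInequality}.)
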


We handle separately the inequality ``\(\ge\)'' in the following

\begin{lemma}
\label{lemmaHahnBanachInequality}
    Suppose that $H$ satisfies \ref{monotone} and \ref{finite_subadditivity}.
If \(f : \R^d \to [0, \infty]\) is  quasicontinuous, then, for every locally finite nonnegative Borel measure \(\mu\) in \(\R^d\) such that \(\mu \le H\) on open subsets of \(\R^d\), we have 
\[
\int f \dif \mu \le \int f \dif H.
\]
\end{lemma}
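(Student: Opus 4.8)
The plan is to pass both integrals to their distribution-function (layer-cake) form and to compare the resulting superlevel sets. By definition of the Choquet integral one has \(\int f \dif H = \int_0^\infty H(\{f > t\}) \dif t\), while the classical Cavalieri identity for a nonnegative measurable function gives \(\int f \dif \mu = \int_0^\infty \mu(\{f > t\}) \dif t\). Hence it suffices to establish the pointwise comparison \(\mu(\{f > t\}) \le H(\{f > t\})\) for every \(t > 0\) and then integrate in \(t\); the case \(\int f \dif H = \infty\) being trivial.

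Before invoking the Cavalieri identity I would record that \(f\) is \(\mu\)-measurable. For each \(j \in \N\), quasicontinuity furnishes an open set \(\omega_j\) with \(H(\omega_j) \le 1/j\) on whose complement \(f\) is continuous; since \(\mu \le H\) on open subsets, \(\mu(\omega_j) \le 1/j\). Thus \(f\) agrees, off the \(\mu\)-null set \(\bigcap_j \omega_j\), with a Borel function (being relatively continuous on each closed set \(\R^d \setminus \omega_j\)), so the distribution-function representation of \(\int f \dif \mu\) is legitimate.

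The heart of the matter is the pointwise estimate, and the main obstacle is that \(\{f > t\}\) is in general neither open nor a set to which the hypothesis \(\mu \le H\) applies directly. The device is the observation \eqref{eqQuasicontinuousOpen}: fixing \(t > 0\) and \(\epsilon > 0\), quasicontinuity yields an open set \(\omega\) with \(H(\omega) \le \epsilon\) for which \(\{f > t\} \cup \omega\) is open. I would then chain
\[
\mu(\{f > t\}) \le \mu(\{f > t\} \cup \omega) \le H(\{f > t\} \cup \omega) \le H(\{f > t\}) + H(\omega) \le H(\{f > t\}) + \epsilon,
\]
where the first inequality is monotonicity of \(\mu\), the second is the hypothesis \(\mu \le H\) applied to the \emph{open} set \(\{f > t\} \cup \omega\), and the third is finite subadditivity \ref{finite_subadditivity} of \(H\). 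Since \(\omega\) is chosen depending on \(\epsilon\) but not on \(t\), letting \(\epsilon \to 0\) at the fixed level \(t\) gives the clean bound \(\mu(\{f > t\}) \le H(\{f > t\})\).

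Integrating this inequality over \(t \in (0, \infty)\) and using the two layer-cake identities then delivers \(\int f \dif \mu \le \int f \dif H\), as claimed. The one point requiring care is the order of limits: the \(\epsilon\)-error must be removed \emph{pointwise in \(t\)} before integrating, since integrating the \(\epsilon\)-perturbed inequality first would contribute \(\int_0^\infty \epsilon \dif t = \infty\). Because the error sits entirely with \(H(\omega) \le \epsilon\) and \(\omega\) is independent of \(t\), this reordering is harmless.
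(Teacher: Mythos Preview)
Your proof is correct and follows essentially the same route as the paper: both use \eqref{eqQuasicontinuousOpen} to make \(\{f>t\}\cup\omega\) open and then chain \(\mu(\{f>t\})\le\mu(\{f>t\}\cup\omega)\le H(\{f>t\}\cup\omega)\le H(\{f>t\})+H(\omega)\). The only difference is cosmetic: you send \(\epsilon\to 0\) pointwise in \(t\) before integrating, whereas the paper integrates over \((0,k)\), then lets \(\epsilon\to 0\) and finally \(k\to\infty\); your ordering is slightly cleaner, and you additionally justify the \(\mu\)-measurability of \(f\), which the paper leaves implicit.
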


\begin{proof}[Proof of Lemma~\ref{lemmaHahnBanachInequality}]
Since \(f\) is quasicontinuous, for every \(\epsilon > 0\) there exists an open set \(\omega \subset \R^d\) such that \(H(\omega) \le \epsilon\) and \(f|_{\R^d \setminus \omega}\) is continuous.
Then, by \eqref{eqQuasicontinuousOpen},  the set \(\{f > t\} \cup \omega\) is open in \(\R^d\) for every \(t > 0\).
Since \(\mu\) is monotone, \(\mu \le H\) on open sets, and \(H\) is finitely subadditive, we get
\[
\mu(\{f > t\})
\le \mu(\{f > t\} \cup \omega)
\le H(\{f > t\} \cup \omega)
\le H(\{f > t\}) + H(\omega).
\]
Given \(k > 0\), we integrate both members with respect to \(t \in (0, k)\) to get
\[
\int_0^k \mu(\{f > t\}) \dif t
\le \int_0^k H(\{f > t\}) \dif t + H(\omega) k
\le \int f \dif H + \epsilon k.
\]
Letting \(\epsilon \to 0\) and then \(k \to \infty\), we have the conclusion using Cavalieri's principle for the Lebesgue integral.
\end{proof}

\begin{proof}[Proof of Proposition~\ref{propositionHahnBanach}]
	Inequality ``\(\ge\)'' in the statement readily follows from Lemma~\ref{lemmaHahnBanachInequality} for any nonnegative quasicontinuous function.
 We thus focus on the proof of the reverse inequality 
 ``\(\le\)''.
 We first show it for a nonnegative function \(f \in C_c(\R^d)\), assuming in addition that \(H\) is finite.
	Since \(H\) is strongly subadditive, the function
	\[{}
	P : \psi \in C_{c}(\R^d) \longmapsto \int \psi^{+} \dif H \in \R_{+}
	\]
	is well-defined and sublinear.
	By the Hahn-Banach Theorem, the linear functional
	\[{}
	t f \longmapsto t \int f \dif H
	\]
	defined on the one dimensional vector subspace \(\{t f : t \in \R\}\) has a linear extension \(F : C_{c}(\R^d) \to \R\) such that \(F \le P\) on \(C_{c}(\R^d)\).{}
	
	Observe that if \(\psi \in C_{c}(\R^d)\) is nonpositive, we have \(F(\psi) \le P(\psi) = 0\).{}
	Thus, \(F\) is positive and then, for every compact subset \(S \subset \R^d\), its restriction to \(C(S)\) is a continuous linear functional.{}
	Hence, by the Riesz Representation Theorem, there exists a locally finite nonnegative Borel measure \(\mu\) in \(\R^d\) such that
	\[{}
	F(\psi){}
	= \int \psi \dif \mu{}
	\quad \text{for every \(\psi \in C_{c}(\R^d)\).}
	\]
	In particular, since \(f\) is nonnegative,
	\[{}
	\int f \dif \mu 
	= F(f){}
	= P(f){}
	= \int f \dif H.
	\]
	It remains to observe that \(\mu \le H\) on open subsets of \(\R^d\).{}
	To this end, take a nonempty open set \(U \subset \R^d\) and a compact subset \(K \subset U\).{}
	There exists \(\psi \in C_{c}(\R^d)\) such that \(0 \le \psi \le 1\) in \(\R^d\), \(\psi = 1\) on \(K\) and \(\supp{\psi} \subset U\).{}
	We then have by monotonicity of \(\mu\) and \(H\),
	\[{}
	\mu(K) 
	\le \int \psi \dif\mu{}
	= F(\psi) 
	\le P(\psi)
	= \int \psi \dif H
	\le H(U).
	\]
	Since this inequality holds for every compact subset \(K \subset U\), by inner regularity of \(\mu\) we conclude that \(\mu(U) \le H(U)\).
	
	We next prove the inequality ``\(\le\)'' for a nonnegative function \(f \in C_c(\R^d)\) that satisfies \eqref{eqLevelSets}.
 Take \(\eta > 0\) to be chosen below and consider the set function \(H_\eta\) defined by contraction for every \(A \subset \R^d\) as
 \[
 H_\eta (A)
	\vcentcolon= H(A \cap \{f > \eta\}).
\]
Observe that \(H_\eta\) is also monotone and strongly subadditive.
Moreover, for every \(n \in \N\),
	\begin{equation}
	\label{eqChoquet-1147}
	\int f \dif H_\eta
	 = \int_0^\infty H(\{f > \max{\{t, \eta\}}\}) \dif t
	 \ge \int_\eta^\infty H(\{f > t\}) \dif t.
	\end{equation}
	We may apply the previous case with finite set function \(H_\eta\) to get
    \begin{equation}
    \label{eqChoquet-1163}
    \int f \dif H_\eta
	\le \sup_{\mu \le H_\eta}{\int f \dif \mu}
	\le \sup_{\mu \le H}{\int f \dif \mu}.
    \end{equation}
	Combining \eqref{eqChoquet-1147} and \eqref{eqChoquet-1163}, we then obtain
	\[
    \int_\eta^\infty H(\{f > t\}) \dif t
    \le \sup_{\mu \le H}{\int f \dif \mu}.
	\]
	As \(\eta \to 0\), it follows from Fatou's lemma for the Lebesgue integral that
	\begin{equation}
    \label{eqChoquet-1216}
	\int f \dif H
	\le \sup_{\mu \le H}{\int f \dif \mu}.
	\end{equation}
	
	To conclude, we now prove \eqref{eqChoquet-1216} for an arbitrary nonnegative quasicontinuous function \(f\) for which \eqref{eqLevelSets} holds.
    To this end, we apply Proposition~\ref{propositionQuasicontinuousContinuousBounded} that ensures the existence of a sequence of nonnegative functions \((\varphi_n)_{n \in \N}\) in \(C_c(\R^d)\) such that \(\int |\varphi_n - f| \dif H \to 0\).
	For every \(n \in \N\), we have by sublinearity of the Choquet integral and by Lemma~\ref{lemmaHahnBanachInequality} applied to the quasicontinuous function \(|\varphi_n - f|\),
	\[
	\int \varphi_n \dif \mu
	\le \int f \dif \mu + \int |\varphi_n - f| \dif \mu
	\le \int f \dif \mu + \int |\varphi_n - f| \dif H.
	\]
	Taking the supremum on both sides with respect to \(\mu \le H\), we have by the first part of the proof,
	\[
	\int \varphi_n \dif H
	\le \sup_{\mu \le H}{\int f \dif \mu} + \int |\varphi_n - f| \dif H.
	\]
	As \(n \to \infty\), by convergence of the sequence \((\varphi_n)_{n \in \N}\) with respect to the Choquet integral we get \eqref{eqChoquet-1216}, which completes the proof.
\end{proof}

Assumption \eqref{eqLevelSets} can be removed under inner regularity of open sets with infinite capacity:

\begin{mydescription}
\item[{\sc semifinite}]
\label{regularity_infinite_capacity} 
For every open set \(U \subset \R^d\) such that \(H(U) = \infty\) and every \(M \ge 0\), there exists a subset \(E \subset U\) with \(M \le H(E) < \infty\).
\end{mydescription}

We then get the counterpart of Proposition~\ref{propositionHahnBanach} for all nonnegative quasicontinuous functions with infinite Choquet integral:

\begin{proposition}
    \label{propositionHahnBanachInfinite}
    Suppose that $H$ satisfies \ref{monotone}, \ref{strong_subadditive}, \ref{weak_continuity_from_above}, and \ref{regularity_infinite_capacity}.
    If \(f : \R^d \to [0, \infty]\) is a quasicontinuous function such that
    \begin{equation}
    \label{eqLevelSetFalse}
    H(\{f > t\}) = \infty
    \quad \text{for some \(t > 0\),}
    \end{equation}
    then 
\[
\int f \dif H = \infty = 
\sup{\left\{\int f \dif \mu
	\left|
	\begin{alignedat}{2}
	& \text{ \(\mu \ge 0\) is a locally finite Borel measure,}\\
	& \text{ \(\mu \le H\) on open subsets of \(\R^d\)}
	\end{alignedat}
	\right.
	\right\}}.
	\]
\end{proposition}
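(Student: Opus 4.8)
The plan is to prove the two equalities separately, the first being routine and the second carrying all the content. For \(\int f \dif H = \infty\): if \(H(\{f > t_0\}) = \infty\), then since \(\{f > s\} \supset \{f > t_0\}\) for \(0 < s \le t_0\), assumption \ref{monotone} forces \(H(\{f > s\}) = \infty\) on the whole interval \((0, t_0]\). Hence the nonincreasing integrand in \eqref{choquet_def} is identically \(\infty\) there and \(\int f \dif H = \int_0^\infty H(\{f > s\}) \dif s = \infty\). Since the inequality ``\(\le\)'' between the supremum and \(\int f \dif H\) holds for every admissible \(\mu\) by Lemma~\ref{lemmaHahnBanachInequality}, the real task is to show that the supremum is also \(\infty\); that is, for each \(M\) I must exhibit a locally finite nonnegative Borel measure \(\mu \le H\) on open sets with \(\int f \dif\mu \ge M\).

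For this I would first move the infinite capacity onto an open set. Given \(\epsilon > 0\), quasicontinuity furnishes an open \(\omega\) with \(H(\omega) \le \epsilon\) and \(f|_{\R^d \setminus \omega}\) continuous, so that by \eqref{eqQuasicontinuousOpen} the set \(V := \{f > t_0\} \cup \omega\) is open and \(H(V) \ge H(\{f > t_0\}) = \infty\) by \ref{monotone}. Applying \ref{regularity_infinite_capacity} to \(V\), for each \(M\) I obtain \(E \subset V\) with \(M \le H(E) < \infty\). The key move is to replace \(H\) by the \emph{contracted} set function \(H_E(A) := H(A \cap E)\), which inherits \ref{monotone} and \ref{strong_subadditive}, is finite (so that \eqref{eqLevelSets} holds automatically for it), and satisfies \(H_E \le H\) with \(f\) still quasicontinuous. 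Since \(E \setminus \omega \subset \{f > t_0\}\), we have \(f > t_0\) there, so for \(0 < s < t_0\) one gets \(E \setminus \omega \subset \{f > s\} \cap E\), and \ref{finite_subadditivity} yields \(H_E(\{f > s\}) \ge H(E \setminus \omega) \ge H(E) - H(\omega) \ge M - \epsilon\). Integrating in \(s\) gives \(\int f \dif H_E \ge t_0 (M - \epsilon)\). Applying Proposition~\ref{propositionHahnBanach} to the finite capacity \(H_E\),
\[
t_0 (M - \epsilon) \le \int f \dif H_E = \sup_{\mu \le H_E}{\int f \dif\mu} \le \sup_{\mu \le H}{\int f \dif\mu},
\]
and letting \(M \to \infty\) (with \(\epsilon\) fixed small) would give the claim.

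The step that demands the most care, and which I expect to be the main obstacle, is the invocation of Proposition~\ref{propositionHahnBanach} for \(H_E\): beyond \ref{monotone} and \ref{strong_subadditive}, that proposition needs \(H_E\) to satisfy \ref{weak_continuity_from_above}, which for \(H_E\) amounts to \(\lim_{r \to \infty} H\bigl((F \cap E) \setminus B_r\bigr) = 0\). This is transparent, and indeed vacuous for large \(r\), when \(E\) is \emph{bounded}. I would therefore take \(E\) bounded whenever the infinite capacity is visible in a ball: if \(\sup_R H(\{f > t_0\} \cap B_R) = \infty\), then \(E := \{f > t_0\} \cap B_R\) for \(R\) large is bounded with \(H(E)\) as large as desired, and the computation above runs verbatim (here one does not even need \(\omega\)). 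The genuinely delicate situation is when the capacity escapes to infinity, i.e.\ \(H(\{f > t_0\}) = \infty\) while \(\sup_R H(\{f > t_0\} \cap B_R) < \infty\)—a failure of continuity from below—since then no bounded \(E\) carries large capacity and \(H_E\) need not be evanescent. In that regime I would exploit that each tail \(V \setminus B_r\) is open with \(H(V \setminus B_r) = \infty\) (by \ref{finite_subadditivity}, as \(H(V \cap B_r)\) remains finite), apply \ref{regularity_infinite_capacity} on these tails to find finite-capacity pieces arbitrarily far out, and assemble from them a locally finite measure whose mass escapes to infinity; the hard point is to keep the assembled measure dominated by \(H\), which I would handle by running the Hahn--Banach construction underlying Proposition~\ref{propositionHahnBanach} on successive bounded shells and passing to a limit.
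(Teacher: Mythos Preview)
Your main line is exactly the paper's proof: pass to the open set \(V=\{f>t_0\}\cup\omega\) via quasicontinuity, apply \ref{regularity_infinite_capacity} to extract \(E\subset V\) with \(M\le H(E)<\infty\), contract to \(H_E(A)=H(A\cap E)\), bound \(\int f\dif H_E\) below by essentially \(t_0\, H(E)\), and invoke Proposition~\ref{propositionHahnBanach} for the finite capacity \(H_E\) to reach \(\sup_{\mu\le H}\int f\dif\mu\). The paper fixes \(H(\omega)\le 1\), writes \(H_n\) for the contraction to a set with \(H(E_n)\ge n\), obtains \(t(n-1)\le\int f\dif H_n\), and then concludes in a single line: ``Since \(H_n\) is finite and \(H_n\le H\), by Proposition~\ref{propositionHahnBanach} we have \(\int f\dif H_n\le\sup_{\mu\le H_n}\int f\dif\mu\le\sup_{\mu\le H}\int f\dif\mu\).''

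Everything after ``The step that demands the most care\ldots'' in your proposal is extraneous to the paper's argument. The paper does \emph{not} verify \ref{weak_continuity_from_above} for the contracted capacity, does not split into cases according to whether the infinite capacity is visible in bounded sets, and certainly does not run a shell-by-shell Hahn--Banach construction; it simply invokes Proposition~\ref{propositionHahnBanach} for \(H_n\) on the grounds that \(H_n\) is finite (so \eqref{eqLevelSets} holds automatically) and that \(H_n\le H\). If your goal is to reproduce the paper's proof, you should stop at that invocation. As a side remark, your ``visible in a ball'' branch is itself not airtight: taking \(E=\{f>t_0\}\cap B_R\) presupposes \(H(E)<\infty\), which is not guaranteed here since \ref{inclusion_continuous_functions} is not among the hypotheses.
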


\begin{proof}
    From \eqref{eqLevelSetFalse}, we have \(\int f \dif H = \infty\).
    Next, by quasicontinuity of \(f\), there exists an open set \(\omega \subset \R^d\) such that \(H(\omega) \le 1\) and \(f|_{\R^d \setminus \omega}\) is continuous.
    In particular, by \eqref{eqQuasicontinuousOpen}, the set \(\{f > t\} \cup \omega\) is open in \(\R^d\).
    By monotonicity of \(H\) and \eqref{eqLevelSetFalse}, 
    \[
    H(\{f > t\} \cup \omega) \ge H(\{f > t\}) = \infty.
    \]
    Applying \ref{regularity_infinite_capacity}, for every \(n \in \N\) there exists a 
    subset \(E_n \subset \{f > t\} \cup \omega\) with \(n \le H(E_n) < \infty\).
    Let \(H_n\) be the set function defined for every \(A \subset \R^d\) by \(H_n(A) \vcentcolon= H(A \cap E_n)\).
    Note that
    \[
    t H(\{f > t\} \cap E_n )
    \le \int_0^t H_n(\{f > s\}) \dif s
    \le \int f \dif H_n.
    \]
    Since \(E_n \subset \{f > t\} \cup \omega\), by monotonicity and finite subadditivity of \(H\),
    \[
    n \le H(E_n)
    \le H(\{f > t\} \cap E_n )
    + H(\omega)
    \le H(\{f > t\} \cap E_n )
    + 1
    \]
    Thus,
    \begin{equation}
    \label{eq1429}
    t (n - 1)
    \le \int f \dif H_n
    .
    \end{equation}
    Since \(H_n\) is finite and \(H_n \le H\), by Proposition~\ref{propositionHahnBanach} we have
    \begin{equation}
    \label{eq1437}
    \int f \dif H_n 
    \le \sup_{\mu \le H_n}{\int f \dif \mu} 
    \le \sup_{\mu \le H}{\int f \dif \mu}.
    \end{equation}
    Combining \eqref{eq1429} and \eqref{eq1437}, we get
    \[
    t(n - 1)
    \le \sup_{\mu \le H}{\int f \dif \mu}.
    \]
    This implies the conclusion since \(n\) can be chosen arbitrarily large.
\end{proof}

From Propositions~\ref{propositionHahnBanach} and~\ref{propositionHahnBanachInfinite}, we deduce the following analogue of Fatou's lemma for quasicontinuous functions:

\begin{corollary}
\label{corollaryFatou}
    Suppose that $H$ satisfies \ref{monotone}, \ref{strong_subadditive}, \ref{weak_continuity_from_above}, and \ref{regularity_infinite_capacity}.
If \((f_n)_{n \in \N}\) is a sequence of nonnegative quasicontinuous functions in \(\R^d\),
then, for every quasicontinuous function \(f : \R^d \to [0, \infty]\) such that 
\(f \le \liminf\limits_{n \to \infty}{f_n}\) in \(\R^d\), we have
\[
\int f \dif H
\le \liminf_{n \to \infty}{\int f_n \dif H}.
\]
\end{corollary}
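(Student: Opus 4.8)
The plan is to use the dual representation of the Choquet integral established in Propositions~\ref{propositionHahnBanach} and~\ref{propositionHahnBanachInfinite} to linearize the problem: together these cover every nonnegative quasicontinuous function, according to whether \eqref{eqLevelSets} or \eqref{eqLevelSetFalse} holds, so that for any such function $g$ one has
\[
\int g \dif H
= \sup{\left\{ \int g \dif \mu : \mu \ge 0 \text{ locally finite Borel with } \mu \le H \text{ on open sets} \right\}}.
\]
The point is that once the integration is against a genuine measure $\mu$, one may invoke the classical Fatou's lemma, which is not directly available for $H$.

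First I would apply the representation above to $f$, which is nonnegative and quasicontinuous. Then I would fix an arbitrary admissible measure $\mu$ (nonnegative, locally finite Borel, with $\mu \le H$ on open subsets of $\R^d$) and estimate $\int f \dif \mu$. Since $f \le \liminf\limits_{n \to \infty}{f_n}$ pointwise in $\R^d$ and $\mu$ is a measure, monotonicity of the Lebesgue integral together with the classical Fatou's lemma gives
\[
\int f \dif \mu
\le \int \liminf_{n \to \infty}{f_n} \dif \mu
\le \liminf_{n \to \infty}{\int f_n \dif \mu}.
\]
Next, for each $n$ the function $f_n$ is nonnegative and quasicontinuous, so Lemma~\ref{lemmaHahnBanachInequality} yields $\int f_n \dif \mu \le \int f_n \dif H$, whence $\liminf\limits_{n \to \infty}{\int f_n \dif \mu} \le \liminf\limits_{n \to \infty}{\int f_n \dif H}$. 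Chaining these inequalities shows that $\int f \dif \mu \le \liminf\limits_{n \to \infty}{\int f_n \dif H}$ for every admissible $\mu$; taking the supremum over $\mu$ on the left and using the representation for $f$ then gives the desired conclusion.

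The main obstacle is to justify that the classical Fatou's lemma is genuinely applicable, that is, that each $f_n$ (and $f$) is $\mu$-measurable, so that $\int f_n \dif \mu$ and $\int \liminf_n f_n \dif \mu$ are meaningful. This is precisely where the constraint $\mu \le H$ on open sets is used: given $\epsilon > 0$, quasicontinuity of $f_n$ furnishes an open set $\omega$ with $H(\omega) \le \epsilon$, hence $\mu(\omega) \le H(\omega) \le \epsilon$, on whose complement $f_n$ is continuous. Thus $f_n$ is continuous off open sets of arbitrarily small $\mu$-measure, which makes it measurable with respect to the completion of $\mu$; equivalently, by \eqref{eqQuasicontinuousOpen} each superlevel set $\{f_n > t\}$ differs from an open set by a subset of $\omega$ and is therefore $\mu$-measurable. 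Once this measurability is secured, the remaining steps are routine applications of the results quoted above.
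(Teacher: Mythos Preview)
Your argument is correct and follows essentially the same route as the paper: fix an admissible measure \(\mu\), apply the classical Fatou lemma together with Lemma~\ref{lemmaHahnBanachInequality}, and then take the supremum over \(\mu\) using the dual representation from Propositions~\ref{propositionHahnBanach} and~\ref{propositionHahnBanachInfinite}. The only cosmetic difference is that the paper first reduces to the case \(\liminf_{n}\int f_n \dif H < \infty\) and then invokes Proposition~\ref{propositionHahnBanachInfinite} in contrapositive form to check that \(f\) satisfies \eqref{eqLevelSets}, whereas you package the two propositions into a single representation formula at the outset; your additional discussion of \(\mu\)-measurability is a welcome clarification that the paper leaves implicit.
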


\begin{proof}
We may assume that each \(f_n\) has finite Choquet integral and that \(\liminf\limits_{n \to \infty}{\int f_n \dif H} < \infty\).
For every nonnegative locally finite Borel measure \(\mu\) in \(\R^d\) such that \(\mu \le H\) on open subsets of \(\R^d\), we have by monotonicity  and Fatou's lemma for the Lebesgue integral,
\[
\int f \dif\mu 
\le \int \liminf_{n \to \infty}{f_n} \dif\mu
\le \liminf_{n \to \infty}{\int f_n \dif \mu}.
\]
Thus, by Lemma~\ref{lemmaHahnBanachInequality},
\begin{equation}
\label{eq-1311}
\int f \dif\mu
\le \liminf_{n \to \infty}{\int f_n \dif H}.
\end{equation}
Since \(f\) is quasicontinuous and the right-hand side is finite, by Proposition~\ref{propositionHahnBanachInfinite} we must have \(H(\{f > t\}) < \infty\) for every \(t > 0\).
The assumptions of Proposition~\ref{propositionHahnBanach} are then satisfied by \(f\) and it thus suffices to take the supremum with respect to \(\mu\) in the left-hand side of \eqref{eq-1311}.
\end{proof}

The assumption \ref{regularity_infinite_capacity} avoids a gap between sets of finite and infinite capacities, which is illustrated in the following example where the conclusion of Corollary~\ref{corollaryFatou} fails:

\begin{example}
\label{exampleInfinity}
Let \(H\) be defined for every nonempty subset \(A \subset \R^d\) as 
\[
H(A) =
\begin{cases}
1 & \text{if \(A\) is bounded,}\\
\infty & \text{if \(A\) is unbounded,}
\end{cases}
\]
For each \(n \in \N\), take \(f_n \in C_c(\R^d)\) such that \(0 \le f_n \le 1\) in \(\R^d\), \(f_n = 1\) in \(B_n\) and \(\supp{f_n} \subset B_{n + 1}\).
Then, \(\int f_n \dif H = 1\) and \(f_n \to 1\) pointwise in \(\R^d\) as \(n \to \infty\), but \(\int 1 \dif H = \infty\).
\end{example}

One can fulfill assumption \ref{regularity_infinite_capacity} by replacing \(H\) with a more regular set function \(\widetilde{H} : \mathcal{P}(\R^d) \to [0, \infty]\) defined for every \(A \subset \R^d\) by
\[
\widetilde{H}(A)
\vcentcolon= \sup{\Bigl\{ H(D) : D \subset A,\ H(D) < \infty  \Bigr\}}
.
\]
Observe that \(\widetilde{H}\) is monotone regardless of \(H\).
Under monotonicity of \(H\) itself, these set functions coincide on sets of finite \(H\)~capacity:

\begin{proposition}
    If \(H\) satisfies \ref{monotone}, then 
    \[
    \widetilde{H}(A)
    = H(A)
    \quad \text{for every \(A \subset \R^d\) with \(H(A) < \infty\).}
    \]
    Hence, \(\widetilde{H}\) also satisfies \ref{regularity_infinite_capacity}.
\end{proposition}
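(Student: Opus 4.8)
The plan is to prove the displayed equality by two elementary inequalities and then read off \ref{regularity_infinite_capacity} as an immediate consequence. For the equality, fix $A \subset \R^d$ with $H(A) < \infty$. The inequality $\widetilde{H}(A) \ge H(A)$ will come from simply using $D = A$ as a competitor in the supremum defining $\widetilde{H}(A)$: indeed $A \subset A$ and $H(A) < \infty$, so $A$ is admissible and $H(A)$ is among the values over which the supremum is taken. The reverse inequality $\widetilde{H}(A) \le H(A)$ will follow from \ref{monotone}, since every admissible $D \subset A$ satisfies $H(D) \le H(A)$, so that the supremum cannot exceed $H(A)$. Combining the two gives $\widetilde{H}(A) = H(A)$.

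For the second assertion, I would first unwind what it means for $\widetilde{H}$ to satisfy \ref{regularity_infinite_capacity}, namely: for every open $U \subset \R^d$ with $\widetilde{H}(U) = \infty$ and every $M \ge 0$, there is $E \subset U$ with $M \le \widetilde{H}(E) < \infty$. The key observation is that $\widetilde{H}(U) = \infty$ says precisely that the supremum $\sup\{H(D) : D \subset U,\ H(D) < \infty\}$ is infinite; hence, given $M$, one may choose $D \subset U$ with $H(D) < \infty$ and $H(D) \ge M$. Taking $E := D$ and invoking the first part (applicable because $H(E) = H(D) < \infty$) yields $\widetilde{H}(E) = H(E) = H(D)$, so that $M \le \widetilde{H}(E) < \infty$ with $E \subset U$, as required.

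There is no genuine obstacle here; the whole content is bookkeeping. The only point requiring care is the interpretation of the phrase ``$\widetilde{H}$ satisfies \ref{regularity_infinite_capacity}'': one must replace $H$ by $\widetilde{H}$ in both the hypothesis $H(U) = \infty$ and the conclusion $M \le H(E) < \infty$, and then recognize that producing the set $E$ reduces to the very definition of $\widetilde{H}$ as a supremum of finite $H$-values, combined with the already-established coincidence $\widetilde{H} = H$ on sets of finite $H$-capacity. I would also note in passing that the supremum defining $\widetilde{H}(A)$ is taken over a nonempty family whenever some subset of $A$ has finite capacity (for instance $A$ itself in the finite-capacity case), which is all that the argument uses.
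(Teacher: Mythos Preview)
Your proof is correct and follows essentially the same approach as the paper: the equality is obtained by noting that \(A\) itself is an admissible competitor and that monotonicity bounds the supremum, and \ref{regularity_infinite_capacity} for \(\widetilde{H}\) is then read off directly from the definition of \(\widetilde{H}\) as a supremum together with the first part. The paper phrases the second step slightly more compactly by rewriting \(\widetilde{H}(A) = \sup\{\widetilde{H}(D) : D \subset A,\ \widetilde{H}(D) < \infty\}\), but this is exactly your argument in condensed form.
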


\begin{proof}
By the monotonicity of \(H\), the supremum in the definition of \(\widetilde{H}(A)\) is achieved by the set \(A\) itself whenever \(H(A) < \infty\).
That \ref{regularity_infinite_capacity} holds, then follows from the fact that we may then reformulate the definition of \(\widetilde{H}\) as
\[
\widetilde{H}(A)
= \sup{\Bigl\{ \widetilde{H}(D) : D \subset A,\ \widetilde{H}(D) < \infty  \Bigr\}}.
\qedhere
\]
\end{proof}

Note that \(\widetilde{H}\) inherits several properties of \(H\):

\begin{proposition}
    Suppose that \(H\) satisfies \ref{monotone}.
    If \(H\) also satisfies any of the assumptions  \ref{finite_subadditivity}, \ref{countable_subadditivity}, \ref{strong_subadditive}, or \ref{weak_continuity_from_above}, then so does \(\widetilde{H}\).
    \comment{The formulation of this statement deserves some improvement.}
\end{proposition}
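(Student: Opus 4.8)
The plan is to exploit a single structural fact about the regularization $\widetilde{H}$: since $H$ is monotone one has $\widetilde{H} \le H$, and, more usefully, any competitor $D \subset A$ with $H(D) < \infty$ in the definition of $\widetilde{H}(A)$ can be intersected with the constituent sets of $A$ to produce finite-capacity competitors for the relevant pieces. This reduces each inheritance claim to the corresponding property of $H$ together with monotonicity, and I would treat the four properties one at a time in increasing order of difficulty, the last one being the genuine obstacle.

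For \ref{finite_subadditivity} (and identically for \ref{countable_subadditivity}), given $D \subset E \cup F$ with $H(D) < \infty$, I would write $D = (D \cap E) \cup (D \cap F)$. Finite subadditivity of $H$ gives $H(D) \le H(D \cap E) + H(D \cap F)$, and since $D \cap E \subset E$ with $H(D \cap E) \le H(D) < \infty$ by monotonicity, the set $D \cap E$ is an admissible competitor for $\widetilde{H}(E)$, so $H(D \cap E) \le \widetilde{H}(E)$; likewise $H(D \cap F) \le \widetilde{H}(F)$. Hence $H(D) \le \widetilde{H}(E) + \widetilde{H}(F)$, and taking the supremum over all such $D$ yields $\widetilde{H}(E \cup F) \le \widetilde{H}(E) + \widetilde{H}(F)$. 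The countable case is the verbatim argument with $D = \bigcup_{n} (D \cap E_n)$ and countable subadditivity of $H$.

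For \ref{strong_subadditive} I first note that strong subadditivity already forces finite subadditivity, because $H(E \cap F) \ge 0$ gives $H(E \cup F) \le H(E) + H(F) - H(E \cap F) \le H(E) + H(F)$ (the infinite cases being trivial); I will need this to control finiteness. I would then fix finite-capacity competitors $D_1 \subset E \cap F$ and $D_2 \subset E \cup F$, and set $A \vcentcolon= D_2 \cap E \subset E$ and $B \vcentcolon= D_2 \cap F \subset F$, so that $A \cup B = D_2$. Applying strong subadditivity of $H$ to $A \cup D_1 \subset E$ and $B \cup D_1 \subset F$ gives $H((A \cup D_1) \cap (B \cup D_1)) + H((A \cup D_1) \cup (B \cup D_1)) \le H(A \cup D_1) + H(B \cup D_1)$. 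Since $D_1 \subset (A \cup D_1) \cap (B \cup D_1)$ and $D_2 \subset D_1 \cup D_2 = (A \cup D_1) \cup (B \cup D_1)$, monotonicity bounds the left-hand side below by $H(D_1) + H(D_2)$; and finite subadditivity makes $A \cup D_1$ and $B \cup D_1$ finite-capacity competitors (as $H(A) \le H(D_2) < \infty$ and $H(D_1) < \infty$) for $\widetilde{H}(E)$ and $\widetilde{H}(F)$. Thus $H(D_1) + H(D_2) \le \widetilde{H}(E) + \widetilde{H}(F)$, and taking the supremum first over $D_1$ and then over $D_2$ gives $\widetilde{H}(E \cap F) + \widetilde{H}(E \cup F) \le \widetilde{H}(E) + \widetilde{H}(F)$.

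The evanescence property \ref{weak_continuity_from_above} is where I expect the real obstacle. The clean reduction is again $\widetilde{H} \le H$: if one knew $H(U) < \infty$, the conclusion would be immediate, since $\widetilde{H}(F \setminus B_r) \le H(F \setminus B_r) \to 0$ by evanescence of $H$. The difficulty is that the hypothesis defining evanescence of $\widetilde{H}$ only provides $\widetilde{H}(U) < \infty$, and this does \emph{not} imply $H(U) < \infty$ — the two disagree precisely when $H$ carries infinite capacity that is invisible to its finite-capacity subsets. In fact the literal implication fails: for the set function of Example~\ref{exampleInfinity} one computes $\widetilde{H} \equiv 1$ on nonempty sets, which violates \ref{weak_continuity_from_above} (take $U = F = \R^d$), even though that $H$ is monotone and evanescent. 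I therefore expect the evanescence clause to require a reformulation — for instance additionally assuming that $H$ is semifinite in the sense of \ref{regularity_infinite_capacity}, which forces $\widetilde{H}(U) < \infty$ to imply $H(U) < \infty$ and so restores the reduction above — in line with the marginal remark that the statement \emph{deserves some improvement}.
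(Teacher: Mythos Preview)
Your treatments of finite, countable, and strong subadditivity match the paper's exactly. For strong subadditivity the paper also picks competitors $C \subset E \cap F$ and $D \subset E \cup F$ of finite $H$-capacity and applies strong subadditivity of $H$ to the pair $(C \cup D) \cap E$ and $(C \cup D) \cap F$; since $C \subset E \cap F$, these are precisely your sets $D_1 \cup A$ and $D_1 \cup B$ under the relabeling $C \leftrightarrow D_1$, $D \leftrightarrow D_2$.

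On \ref{weak_continuity_from_above} you are right and the paper is not. Its entire argument is the sentence ``That $\widetilde{H}$ verifies \ref{weak_continuity_from_above} whenever $H$ does follows from the fact that $\widetilde{H} = H$ on sets where $H$ is finite,'' which commits exactly the non sequitur you diagnose: the hypothesis in the evanescence of $\widetilde{H}$ is $\widetilde{H}(U) < \infty$, not $H(U) < \infty$, and these can differ. Your counterexample via Example~\ref{exampleInfinity} is valid --- that $H$ is monotone and evanescent, yet $\widetilde{H} \equiv 1$ on nonempty sets and fails evanescence at $U = F = \R^d$ --- and your proposed repair through \ref{regularity_infinite_capacity} is precisely what is needed to force $\widetilde{H}(U) < \infty \Rightarrow H(U) < \infty$ and thereby restore the paper's one-line reduction. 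This is presumably the ``improvement'' flagged by the authors' own marginal comment on the statement.
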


\begin{proof}
    That \(\widetilde{H}\) verifies \ref{weak_continuity_from_above} whenever \(H\) does follows from the fact that \(\widetilde{H} = H\) on sets where \(H\) is finite.
    We now assume that \(H\) satisfies \ref{countable_subadditivity}.
    Given a sequence of sets \((E_n)_{n \in \N}\) and \(D \subset \bigcup\limits_{n = 0}^\infty{E_n}\) with \(H(D) < \infty\), by monotonicity of \(H\) for every \(n \in \N\) we have \(H(E_n \cap D) < \infty\).
    Hence, by countable subadditivity of \(H\) and definition of \(\widetilde{H}\),
    \[
    H(D) = H\Bigl( \bigcup_{n = 0}^\infty (E_n \cap D) \Bigr)
    \le \sum_{n = 0}^\infty{H(E_n \cap D)}
    \le \sum_{n = 0}^\infty{\widetilde H(E_n)}.
    \]
    It now suffices to take the supremum with respect to \(D\) in the left-hand side.
    Similarly, one verifies that \(\widetilde{H}\) satisfies \ref{finite_subadditivity} whenever \(H\) does. 
    
    We now assume that \(H\) verifies \ref{strong_subadditive}.
    Given \(E, F \subset \R^d\), take \(C \subset E \cap F\) and \(D \subset E \cup F\) with \(H(C) < \infty\) and \(H(D) < \infty\).
    By finite subadditivity of \(H\), \(H(C \cup D) < \infty\) and then, by monotonicity, \(H\) is finite on every subset of \(C \cup D\).
    Note that, since \(C \subset E \cap F\) and \(D \subset E \cup F\),
    \[
    C \subset ((C \cup D) \cap E) \cap ((C \cup D) \cap F)
    \quad \text{and} \quad 
    D \subset ((C \cup D) \cap E) \cup ((C \cup D) \cap F).
    \]
    By monotonicity and strong subadditivity of \(H\), we then have
    \[
    \begin{split}
    H(C) + H(D) 
    & \le H\bigl(((C \cup D) \cap E) \cap ((C \cup D) \cap F)\bigr) + H\bigl(((C \cup D) \cap E) \cup ((C \cup D) \cap F)\bigr)\\
    & \le H\bigl((C \cup D) \cap E\bigr) + H\bigl((C \cup D) \cap F\bigr).
    \end{split}
    \]
    Since \((C \cup D) \cap E\) and \((C \cup D) \cap F\) are subsets of \(E\) and \(F\) where \(H\) is finite, we get by definition of \(\widetilde{H}\),
    \[
    H(C) + H(D) 
    \le \widetilde{H}(E) + \widetilde{H}(F).
    \]
    It now suffices to take the supremum in the left-hand side with respect to \(C\) and \(D\).
\end{proof}

\section{The space $L^1(H)$}\label{Lone}

We assume that \(H\) satisfies \ref{monotone} and \ref{strong_subadditive}.
We introduce an equivalence relation \(\sim\) among elements in the vector space of real-valued quasicontinuous functions in \(\R^d\) by denoting \(f \sim g\) whenever \(f = g\) quasi-everywhere (q.e.), that is,  there exists \(E \subset \R^d\) such that \(H(E) = 0\) and \(f = g\) in \(\R^d \setminus E\).
Then, \(\Class{f}\) is the equivalence class that contains \(f\).
We let
\[
L^1(H) \vcentcolon= \biggl\{ \Class{f} : f : \R^d \to \R\ \text{is quasicontinuous and } \int |f| \dif H < \infty \biggr\}.
\]
We may naturally equip this set with addition and multiplication by scalar: For every quasicontinuous functions \(f, g\) and \(\lambda \in \R\), let
\[
\Class{f} + \Class{g} \vcentcolon= \Class{f + g}
\quad \text{and} \quad
\lambda \Class{f} \vcentcolon = \Class{\lambda f}.
\]
Observe that the function
\[
\| \Class{f} \|_{L^1(H)}\vcentcolon= \int |f | \dif H
\]
is well-defined in \(L^1(H)\) and, by sublinearity and \(1\)-homogeneity of the Choquet integral, is a norm in this space.

\begin{proposition}
\label{propositionL1Banach}
Suppose that $H$ satisfies \ref{monotone}, \ref{strong_subadditive} and \ref{countable_subadditivity}.  
Then, $L^1(H)$ is a Banach space.
\end{proposition}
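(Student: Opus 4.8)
The plan is to establish completeness through the standard criterion that a Cauchy sequence converges as soon as one of its subsequences does, combined with the fast-convergence device already exploited in Proposition~\ref{propositionDominatedConvergenceConverse}. Starting from a Cauchy sequence \(([f_n])_{n \in \N}\) in \(L^1(H)\) with representatives \(f_n\), I would first extract a subsequence \((f_{n_k})_{k \in \N}\) such that \(\|[f_{n_{k+1}}] - [f_{n_k}]\|_{L^1(H)} \le 1/4^k\), which is possible by the Cauchy property. Writing \(g_k \vcentcolon= f_{n_{k+1}} - f_{n_k}\), the telescoping identity \(f_{n_k} = f_{n_0} + \sum_{j = 0}^{k-1} g_j\) reduces everything to controlling the series \(\sum_j g_j\), whose terms satisfy \(\int |g_j| \dif H \le 1/4^j\).

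The first substantive step reproduces the argument of Proposition~\ref{propositionDominatedConvergenceConverse}: by the Chebyshev inequality (a consequence of \ref{monotone}) together with \ref{countable_subadditivity}, the sets \(A_k \vcentcolon= \bigcup_{j \ge k}\{|g_j| > 1/2^j\}\) satisfy \(H(A_k) \le 1/2^{k-1}\), and on \(\R^d \setminus A_k\) the Weierstrass M-test yields uniform convergence of \(\sum_j g_j\), hence of \((f_{n_k})_k\). Defining \(f\) as the pointwise limit where it exists and, say, \(0\) on the set \(\bigcap_k A_k\) of zero capacity, I obtain \(f_{n_k} \to f\) q.u. That \([f] \in L^1(H)\) then follows from Corollary~\ref{propositionUniformTriangleInequalityCountable} applied to the q.u.-convergent series \(f_{n_0} + \sum_j g_j\), which bounds \(\int |f| \dif H\) by \(\|[f_{n_0}]\|_{L^1(H)} + \sum_j 1/4^j < \infty\); the same corollary applied to the tail \(\sum_{j \ge k} g_j\) gives \(\int |f - f_{n_k}| \dif H \le \sum_{j \ge k} 1/4^j \to 0\), that is, \(f_{n_k} \to f\) in \(L^1(H)\).

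The delicate point, and the step I expect to be the main obstacle, is verifying that \(f\) is quasicontinuous, since the exceptional set in the definition of quasicontinuity must be \emph{open}, whereas the sets \(A_k\) are not. Here I would use \eqref{eqQuasicontinuousOpen}: fixing \(\epsilon > 0\), choose for each \(j\) an open \(V_j\) with \(H(V_j) \le \epsilon/2^{j+2}\) off which \(f_{n_j}\) is finite and continuous, and pick \(k_0\) with \(1/2^{k_0 - 1} \le \epsilon/2\). Since each \(g_j\) is quasicontinuous with exceptional open set \(V_j \cup V_{j+1}\), the augmented superlevel set \(\{|g_j| > 1/2^j\} \cup V_j \cup V_{j+1}\) is open by \eqref{eqQuasicontinuousOpen}, so
\[
\Omega \vcentcolon= \Bigl(\bigcup_{j \in \N} V_j\Bigr) \cup \Bigl(\bigcup_{j \ge k_0} \bigl(\{|g_j| > 1/2^j\} \cup V_j \cup V_{j+1}\bigr)\Bigr)
\]
is open, and by \ref{countable_subadditivity} one has \(H(\Omega) \le \epsilon/2 + H(A_{k_0}) \le \epsilon\). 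Off \(\Omega\) every \(f_{n_j}\) is continuous and, for \(j \ge k_0\), \(|g_j| \le 1/2^j\), so \((f_{n_j})_j\) converges uniformly to \(f\) on \(\R^d \setminus \Omega\) and \(f\) is therefore continuous there; hence \(f\) is quasicontinuous and \([f] \in L^1(H)\).

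Finally, to upgrade subsequential convergence to convergence of the full sequence, I would invoke the triangle inequality for \(\|\cdot\|_{L^1(H)}\), valid because \(H\) is strongly subadditive and the Choquet integral is thus sublinear (Theorem~\ref{sublinear}), writing
\[
\|[f_n] - [f]\|_{L^1(H)} \le \|[f_n] - [f_{n_k}]\|_{L^1(H)} + \|[f_{n_k}] - [f]\|_{L^1(H)}
\]
and letting \(n, n_k \to \infty\); the first term is controlled by the Cauchy property and the second by the subsequential convergence just established. This shows that every Cauchy sequence in \(L^1(H)\) converges, so the space is complete and hence a Banach space.
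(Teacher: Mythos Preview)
Your proposal is correct and follows essentially the same route as the paper's proof: extract a fast subsequence, use Chebyshev and \ref{countable_subadditivity} to get q.u.\ convergence via the Weierstrass M-test, enlarge the exceptional sets to open ones using \eqref{eqQuasicontinuousOpen} so that the limit is quasicontinuous, and then pass to the limit in the integral. The only organisational differences are that the paper builds the open sets \(G_j = \{|f_{n_j} - f_{n_{j+1}}| > 2^{-j}\} \cup \omega_j\) from the outset (so quasicontinuity and q.u.\ convergence come together), whereas you first obtain q.u.\ convergence with the not-necessarily-open \(A_k\) and then construct \(\Omega\) separately; and for the \(L^1(H)\) convergence the paper applies Proposition~\ref{propositionUniformFatou} directly to \(|f_{n_j} - f_{n_i}|\), while you go through Corollary~\ref{propositionUniformTriangleInequalityCountable} on the tail series---both valid and essentially equivalent.
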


The proof is standard, e.g.\@ \cite{C5}*{Proposition~2.1 and 2.2}, though as stated in the introduction we include it for completeness.

\begin{proof}
Let $(\Class{f_n})_{n \in \N}$ be a Cauchy sequence in \(L^1(H)\), where each \(f_n : \R^d \to \R\) is quasicontinuous. We may find positive integers $n_1<n_2<\ldots <n_j$ such that
\begin{equation}
\label{eq-1581}
\|\Class{f_m - f_n}\|_{L^1(H)} 
= \|\Class{f_m} - \Class{f_n}\|_{L^1(H)} 
\leq\frac{1}{4^{j}}
\quad \text{for every $m, n \geq n_j$.}
\end{equation}
This implies
\begin{align*}
\int_0^{1/2^{j}} H\bigl( \bigr\{ |f_{n_j} - f_{n_{j+1}}|> 1/2^{j} \bigr\} \bigr) \dif t \leq \frac{1}{4^{j}}
\end{align*}
and therefore
\begin{align*}
H\bigl( \bigl\{ |f_{n_j} - f_{n_{j+1}}| > 1/2^{j} \bigr\} \bigr) \leq \frac{1}{2^{j}}.
\end{align*}
Since \(|f_{n_j} - f_{n_{j+1}}|\) is quasicontinuous, there exists an open set \(\omega_j \subset \R^d\) such that \(H(\omega_j) \le 1/2^j\) and \(|f_{n_j} - f_{n_{j+1}}|\) is continuous in \(\R^d \setminus \omega_j\).
By \eqref{eqQuasicontinuousOpen}, the set
\begin{align*}
G_j \vcentcolon= \bigl\{ |f_{n_j} - f_{n_{j+1}}|>2^{-j} \bigr\} \cup \omega_j
\end{align*}
is open and, by finite subadditivity of \(H\), 
\(H(G_j) \le {1}/{2^{j - 1}}
\).

Let
\(F_m \vcentcolon= \bigcup\limits_{j = m}^\infty G_j
\).
For any $x \in \R^d \setminus F_m$ we have
\begin{align*}
\sum_{l = m}^\infty |f_{n_l}(x)-f_{n_{l+1}}(x)| \leq \sum_{l = m}^\infty \frac{1}{2^{l}} <+\infty.
\end{align*}
Therefore if for $x \in \R^d \setminus F_m$ one defines
\begin{align*}
f(x)
\vcentcolon= \lim_{j \to \infty} f_{n_j}(x) 
= f_{n_1}(x) + \lim_{k \to \infty} \sum_{l=1}^k \bigl(f_{n_{l+1}}(x)-f_{n_l}(x) \bigr),
\end{align*}
then by the Weierstrass $M$-test, 
\((f_{n_j})_{j \in \N}\) converges uniformly to \(f\) in $\R^d \setminus F_m$, whence $f$ is continuous in $\R^d \setminus F_m$.  
As $(F_m)_{m \ge 1}$ is non-increasing, the function $f$ is well-defined on $\bigcup\limits_{m \geq 1}{(\R^d \setminus F_m)}$.
Finally, let \(f(x) = 0\) for \(x \in \bigcap\limits_{m \ge 1}{F_m}\).
Since, by countable subadditivity of \(H\),\begin{align*}
H(F_m) 
\leq \sum_{j = m}^\infty H(G_j) \leq \frac{1}{2^{m-2}} \to 0
\end{align*}
as $m \to \infty$,  
we deduce that \(f\) is quasicontinuous and \(f_{n_j} \to f\) q.u.

Since \((\Class{f_n})_{n \in \N}\) is a Cauchy sequence in \(L^1(H)\), to prove its convergence to \(\Class{f}\) in \(L^1(H)\) it suffices to prove that the subsequence \((\Class{f_{n_j}})_{j \ge 1}\) converges q.u.\@ to \(\Class{f}\).
For every \(i \ge j \ge 1\), by \eqref{eq-1581} we have
\[
\int |f_{n_j} - f_{n_i}| \dif H
\le \frac{1}{4^j}.
\]
As \((|f_{n_j} - f_{n_i}|)_{i \ge 1}\) converges to \(|f_{n_j} - f|\) when \(i \to \infty\), it follows from Proposition~\ref{propositionUniformFatou} that
\[
\int |f_{n_j} - f| \dif H
\le
\liminf_{i \to \infty}{\int |f_{n_j} - f_{n_i}| \dif H}
\le \frac{1}{4^j},
\]
from which the conclusion follows.
\end{proof}

Every function in \(C_c(\R^d)\) has finite Choquet integral provided that \(H\) satisfies
\begin{mydescription}
    \item[{\sc locally finite}] \label{inclusion_continuous_functions}
    $H(U)<+\infty$ for every bounded open set $U \subset \R^d$.
\end{mydescription}
Then, the quotient space \(C_c(\R^d)/ \!\sim\) is contained in \(L^1(H)\).
Moreover,

\begin{corollary}
Suppose that $H$ satisfies \ref{monotone}, \ref{strong_subadditive}, \ref{countable_subadditivity}, \ref{weak_continuity_from_above}, and \ref{inclusion_continuous_functions}.
Then, \(L^1(H)\) is the completion of \(C_c(\R^d)/ \!\sim \;\)  with respect to the \(L^1(H)\) norm.
\end{corollary}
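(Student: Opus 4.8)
The plan is to invoke the abstract characterization of a completion: a Banach space is, up to isometric isomorphism, the completion of any of its dense linear subspaces. Hence it suffices to establish three things, all of which are essentially already available---that $L^1(H)$ is complete, that $C_c(\R^d)/\!\sim$ is a normed subspace of $L^1(H)$, and that this subspace is dense.

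Completeness is Proposition~\ref{propositionL1Banach}, whose hypotheses \ref{monotone}, \ref{strong_subadditive}, and \ref{countable_subadditivity} are all present. The inclusion $C_c(\R^d)/\!\sim\, \subset L^1(H)$ was already recorded in the discussion preceding the statement; concretely, a function $\varphi \in C_c(\R^d)$ is continuous, hence quasicontinuous, and if $\supp \varphi \subset B_R$ then $\{|\varphi| > t\} \subset B_R$ for $t > 0$ and is empty for $t \ge \|\varphi\|_\infty$, so by \ref{monotone} and \ref{inclusion_continuous_functions},
\[
\int |\varphi| \dif H
= \int_0^{\|\varphi\|_\infty} H(\{|\varphi| > t\}) \dif t
\le \|\varphi\|_\infty\, H(B_R)
< \infty.
\]
Since $\varphi \mapsto \Class{\varphi}$ is linear with $\|\Class{\varphi}\|_{L^1(H)} = \int |\varphi| \dif H$, its image $C_c(\R^d)/\!\sim$ is a normed subspace of $L^1(H)$.

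Density is the substance of the argument, and it is delivered by Proposition~\ref{propositionApproximationQuasicontinuousCompactSupport}: every element of $L^1(H)$ is $\Class{f}$ for some quasicontinuous $f$ with $\int |f| \dif H < \infty$, and that proposition produces $(\varphi_n)_{n \in \N}$ in $C_c(\R^d)$ with $\int |\varphi_n - f| \dif H \to 0$, that is, $\|\Class{\varphi_n} - \Class{f}\|_{L^1(H)} \to 0$. Combining the three facts then yields the claim.

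The only care needed lies in matching hypotheses, and this bookkeeping is the closest thing to an obstacle. Proposition~\ref{propositionApproximationQuasicontinuousCompactSupport} requires \ref{finite_subadditivity}, which follows from \ref{strong_subadditive} because nonnegativity of $H$ gives $H(E \cup F) \le H(E \cap F) + H(E \cup F) \le H(E) + H(F)$; it also requires \ref{weak_continuity_from_above}, which is assumed. One should moreover note that continuous functions are genuinely quasicontinuous here because $H(\emptyset) = 0$: taking a bounded open set $U$ (finite by \ref{inclusion_continuous_functions}) and the closed subset $\emptyset \subset U$, assumption \ref{weak_continuity_from_above} gives $H(\emptyset) = \lim_{r \to \infty} H(\emptyset \setminus B_r) = 0$. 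With these points settled, no analytic input beyond the cited propositions is required.
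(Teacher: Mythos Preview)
Your proof is correct and follows the same approach as the paper: invoke Proposition~\ref{propositionApproximationQuasicontinuousCompactSupport} for density and Proposition~\ref{propositionL1Banach} for completeness. The paper's proof consists of exactly those two citations; your version is simply more explicit about the inclusion $C_c(\R^d)/\!\sim\,\subset L^1(H)$ and the hypothesis bookkeeping (in particular deriving \ref{finite_subadditivity} from \ref{strong_subadditive} and extracting $H(\emptyset)=0$ from \ref{weak_continuity_from_above}), which the paper leaves implicit.
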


\begin{proof}
From Proposition~\ref{propositionApproximationQuasicontinuousCompactSupport} we have that \(C_c(\R^d)/ \!\sim \;\) is dense in \(L^1(H)\).
By Proposition~\ref{propositionL1Banach}, \(L^1(H)\) is complete.
\end{proof}

We conclude this section with a closure property for bounded sequences in \(L^1(H)\).
Firstly, concerning quasi-uniform convergence:

\begin{corollary}
Suppose that \(H\) satisfies \ref{monotone} and \ref{strong_subadditive}.
If \((\Class{f_n})_{n \in \N}\) is a bounded sequence in \(L^1(H)\) such that \((f_n)_{n \in \N}\) converges q.u.\@ to a quasicontinuous function \(f\), then \(\Class{f} \in L^1(H)\) and
\[
\|\Class{f}\|_{L^1(H)}
\le \liminf_{n \to \infty}{\|\Class{f_n}\|_{L^1(H)}}.
\]
\end{corollary}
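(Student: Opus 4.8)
The plan is to reduce the statement to the quasi-uniform Fatou lemma already established in Proposition~\ref{propositionUniformFatou}, applied not to \((f_n)_{n \in \N}\) itself but to the sequence of absolute values \((|f_n|)_{n \in \N}\).

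First I would record that the standing hypotheses \ref{monotone} and \ref{strong_subadditive} imply \ref{finite_subadditivity}, which is what Proposition~\ref{propositionUniformFatou} actually requires. Indeed, given \(E, F \subset \R^d\), strong subadditivity gives \(H(E \cap F) + H(E \cup F) \le H(E) + H(F)\), and since \(H\) takes values in \([0, \infty]\) we have \(H(E \cap F) \ge 0\), whence \(H(E \cup F) \le H(E) + H(F)\). Next I would observe that \(|f_n| \to |f|\) q.u. Indeed, for each \(\epsilon > 0\) there is \(E \subset \R^d\) with \(H(E) \le \epsilon\) on whose complement \((f_n)_{n \in \N}\) is finite and converges uniformly to \(f\); the reverse triangle inequality \(\bigl| |f_n| - |f| \bigr| \le |f_n - f|\) then shows that \((|f_n|)_{n \in \N}\) is finite and converges uniformly to \(|f|\) on the same set \(\R^d \setminus E\).

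With these two points in hand, Proposition~\ref{propositionUniformFatou} applied to the nonnegative sequence \((|f_n|)_{n \in \N}\) yields
\[
\int |f| \dif H \le \liminf_{n \to \infty}{\int |f_n| \dif H} = \liminf_{n \to \infty}{\|\Class{f_n}\|_{L^1(H)}}.
\]
Because \((\Class{f_n})_{n \in \N}\) is bounded in \(L^1(H)\), the right-hand side is finite, so \(\int |f| \dif H < \infty\); combined with the quasicontinuity of \(f\) assumed in the statement, this gives \(\Class{f} \in L^1(H)\). Since \(\|\Class{f}\|_{L^1(H)} = \int |f| \dif H\), the displayed estimate is precisely the asserted norm inequality.

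I do not expect any genuine obstacle: the result is essentially a direct corollary of Proposition~\ref{propositionUniformFatou}. The only two points requiring a line of justification are the derivation of \ref{finite_subadditivity} from \ref{strong_subadditive} (which relies on the nonnegativity of \(H\)) and the transfer of quasi-uniform convergence from \((f_n)_{n \in \N}\) to \((|f_n|)_{n \in \N}\) via the reverse triangle inequality. Neither involves any delicate estimate.
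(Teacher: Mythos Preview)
Your proposal is correct and follows exactly the same approach as the paper, which simply says ``It suffices to apply Proposition~\ref{propositionUniformFatou}''; you have merely spelled out the two routine verifications (that \ref{strong_subadditive} implies \ref{finite_subadditivity}, and that q.u.\ convergence passes to absolute values) that the paper leaves implicit.
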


\begin{proof}
It suffices to apply Proposition~\ref{propositionUniformFatou}, which implies that \(\int |f| \dif H < \infty\) and then \([f] \in L^1(H)\) by quasicontinuity of \(f\).
\end{proof}

Observe that if \(f, g : \R^d \to \R\) are such that \(f = g\) q.e. and \(f\) is quasicontinuous, it need not be true that \(g\) is quasicontinuous.
To make sure that such a property holds, one may require some regularity on sets where \(H\) vanishes:
\begin{mydescription}
    \item[{\sc zero-capacity regularity}] \label{regularity_zero_capacity}
    For every \(E \subset \R^d\) with \(H(E) = 0\) and every \(\epsilon > 0\), there exists an open set \(\omega \supset E\) such that \(H(\omega) \le \epsilon\).
\end{mydescription}

\begin{corollary}
Suppose that \(H\) satisfies \ref{monotone}, \ref{strong_subadditive},  \ref{weak_continuity_from_above}, \ref{regularity_infinite_capacity}, and \ref{regularity_zero_capacity}.
If \((\Class{f_n})_{n \in \N}\) is a bounded sequence in \(L^1(H)\) such that \((f_n)_{n \in \N}\) converges q.e.\@ to a quasicontinuous function \(f\), then \(\Class{f} \in L^1(H)\) and
\[
\|\Class{f}\|_{L^1(H)}
\le \liminf_{n \to \infty}{\|\Class{f_n}\|_{L^1(H)}}.
\]
\end{corollary}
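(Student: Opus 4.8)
The plan is to deduce the conclusion from the Fatou-type inequality for quasicontinuous functions in Corollary~\ref{corollaryFatou}, applied to the sequence $(|f_n|)_{n \in \N}$ (each $|f_n|$ is quasicontinuous since $f_n$ is). The only difficulty in applying it directly is that Corollary~\ref{corollaryFatou} requires the limiting function to be dominated by $\liminf_{n \to \infty}{|f_n|}$ \emph{everywhere} in $\R^d$, whereas q.e.\@ convergence only gives the identity $\liminf_{n \to \infty}{|f_n|} = |f|$ outside a set $E \subset \R^d$ of zero capacity. I would circumvent this by replacing $|f|$ with a quasicontinuous function that agrees with $|f|$ q.e.\@ and satisfies the required pointwise bound on all of $\R^d$.

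More precisely, let $E$ be a set with $H(E) = 0$ such that $f_n(x) \to f(x)$ for every $x \in \R^d \setminus E$, and define $g \vcentcolon= |f| \chi_{\R^d \setminus E}$. Since each $|f_n|$ is nonnegative, on $E$ one has $g = 0 \le \liminf_{n \to \infty}{|f_n|}$, while on $\R^d \setminus E$ one has $g = |f| = \liminf_{n \to \infty}{|f_n|}$; hence $g \le \liminf_{n \to \infty}{|f_n|}$ throughout $\R^d$. The crux is to verify that $g$ is quasicontinuous, and this is exactly where \ref{regularity_zero_capacity} enters. Given $\epsilon > 0$, quasicontinuity of $f$ furnishes an open set $\omega_1$ with $H(\omega_1) \le \epsilon/2$ on whose complement $|f|$ is continuous, while \ref{regularity_zero_capacity} furnishes an open set $\omega_2 \supset E$ with $H(\omega_2) \le \epsilon/2$. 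On $\R^d \setminus (\omega_1 \cup \omega_2) \subset \R^d \setminus E$ one has $g = |f|$, which is continuous there; and by finite subadditivity---which follows from \ref{strong_subadditive} together with $H \ge 0$, since $H(A \cup B) \le H(A) + H(B) - H(A \cap B) \le H(A) + H(B)$---one has $H(\omega_1 \cup \omega_2) \le \epsilon$. Thus $g$ is quasicontinuous.

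With $g$ in hand, I would apply Corollary~\ref{corollaryFatou} to the nonnegative quasicontinuous functions $(|f_n|)_{n \in \N}$ and to $g$, obtaining $\int g \dif H \le \liminf_{n \to \infty}{\int |f_n| \dif H}$. Since $g = |f|$ q.e.\@ and the Choquet integral is unchanged under modification on a set of zero capacity---the superlevel sets $\{g > t\}$ and $\{|f| > t\}$ differ only within $E$, so monotonicity and finite subadditivity give $H(\{g > t\}) = H(\{|f| > t\})$ for every $t > 0$---one concludes that $\int |f| \dif H = \int g \dif H \le \liminf_{n \to \infty}{\|\Class{f_n}\|_{L^1(H)}}$. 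Boundedness of $(\Class{f_n})_{n \in \N}$ in $L^1(H)$ renders the right-hand side finite, so $\int |f| \dif H < \infty$; as $f$ is quasicontinuous, $\Class{f} \in L^1(H)$, and the displayed norm inequality is precisely the estimate just obtained. The main obstacle is the quasicontinuity of $g$, that is, the passage from the q.e.\@ bound to an everywhere pointwise bound, which is resolved by \ref{regularity_zero_capacity}.
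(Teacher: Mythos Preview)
Your proof is correct and follows essentially the same strategy as the paper: modify \(|f|\) on the zero-capacity exceptional set to obtain a quasicontinuous function satisfying the everywhere pointwise bound required by Corollary~\ref{corollaryFatou}, then invoke that corollary and use that the Choquet integral is insensitive to changes on sets of zero capacity. The only cosmetic difference is the choice of modification: the paper works with \(\liminf_{n\to\infty}|f_n|\) itself (observing via \ref{regularity_zero_capacity} that it is quasicontinuous because it coincides q.e.\ with the quasicontinuous function \(|f|\)), whereas you work with \(g = |f|\chi_{\R^d\setminus E}\); both play the identical role.
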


\begin{proof}
We observe that \((|f_n|)_{n \in \N}\) converges q.e.\@ to \(|f|\) and that \(|f|\) is quasicontinuous.
It then follows from \ref{regularity_zero_capacity} that the function \(\liminf\limits_{n \to \infty}{|f_n|}\), which equals \(|f|\) q.e., is also quasicontinuous.
Moreover, by Corollary~\ref{corollaryFatou}, we deduce that
\[
\int |f| \dif H
= \int \liminf_{n \to \infty}{|f_n|} \dif H
\le \liminf_{n \to \infty}{\int |f_n| \dif H},
\]
from which the conclusion follows.
\end{proof}

\section*{Acknowledgments}
D.~Spector is supported by the National Science and Technology Council of Taiwan under research grant number 110-2115-M-003-020-MY3 and the Taiwan Ministry of Education under the Yushan Fellow Program.  Part of this work was undertaken while D.~Spector was visiting IRMP Institute of the Universit\'e catholique de Louvain.  He would like to thank the IRMP Institute for its support and A.~C.~Ponce for his warm hospitality during the visit.

\begin{bibdiv}

\begin{biblist}

\bib{A1}{article}{
   author={Adams, David R.},
   title={Traces of potentials arising from translation invariant operators},
   journal={Ann. Scuola Norm. Sup. Pisa Cl. Sci. (3)},
   volume={25},
   date={1971},
   pages={203--217},
}

\bib{A7}{article}{
  author={Adams, David R.},
  title={Traces of potentials. II},
  journal={Indiana Univ. Math. J.},
  volume={22},
  date={1972/73},
  pages={907--918},
}
\bib{A11}{article}{
   author={Adams, David R.},
   title={A trace inequality for generalized potentials},
   journal={Studia Math.},
   volume={48},
   date={1973},
   pages={99--105},
}
\bib{A12}{article}{
   author={Adams, David R.},
   title={On the exceptional sets for spaces of potentials},
   journal={Pacific J. Math.},
   volume={52},
   date={1974},
   pages={1--5},
}
\bib{A13}{article}{
   author={Adams, David R.},
   title={A note on Riesz potentials},
   journal={Duke Math. J.},
   volume={42},
   date={1975},
   number={4},
   pages={765--778},
}
\bib{A14}{article}{
   author={Adams, David R.},
   title={On the existence of capacitary strong type estimates in $\R^{n}$},
   journal={Ark. Mat.},
   volume={14},
   date={1976},
   number={1},
   pages={125--140},
}

\bib{A16}{article}{
   author={Adams, David R.},
   title={Quasi-additivity and sets of finite $L\sp{p}$-capacity},
   journal={Pacific J. Math.},
   volume={79},
   date={1978},
   number={2},
   pages={283--291},
}
\bib{A17}{article}{
   author={Adams, David R.},
   title={Capacity and the obstacle problem},
   journal={Appl. Math. Optim.},
   volume={8},
   date={1982},
   number={1},
   pages={39--57},
}
\bib{A18}{article}{
   author={Adams, David R.},
   title={Weighted capacity and the Choquet integral},
   journal={Proc. Amer. Math. Soc.},
   volume={102},
   date={1988},
   number={4},
   pages={879--887},
}
\bib{A19}{article}{
   author={Adams, David R.},
   title={A note on Choquet integrals with respect to Hausdorff capacity},
   conference={
      title={Function spaces and applications},
      address={Lund},
      date={1986},
   },
   book={
      series={Lecture Notes in Math.},
      volume={1302},
      publisher={Springer, Berlin},
   },
   date={1988},
   pages={115--124},
}

\bib{A19-a}{article}{
   author={Adams, David R.},
   title={A sharp inequality of J. Moser for higher order derivatives},
   journal={Ann. of Math. (2)},
   volume={128},
   date={1988},
   number={2},
   pages={385--398},
}

\bib{A23-1998}{article}{
   author={Adams, David R.},
   title={Choquet integrals in potential theory},
   journal={Publ. Mat.},
   volume={42},
   date={1998},
   number={1},
   pages={3--66},
}

\bib{A26}{article}{
   author={Adams, David R.},
   title={Capacity and blow-up for the $3+1$ dimensional wave operator},
   journal={Forum Math.},
   volume={20},
   date={2008},
   number={2},
   pages={341--357},
}

\bib{A9}{article}{
   author={Adams, David R.},
   author={Bagby, Richard J.},
   title={Translation-dilation invariant estimates for Riesz potentials},
   journal={Indiana Univ. Math. J.},
   volume={23},
   date={1973/74},
   pages={1051--1067},
}

\bib{AH}{book}{
   author={Adams, David R.},
   author={Hedberg, Lars Inge},
   title={Function spaces and potential theory},
   series={Grundlehren der mathematischen Wissenschaften},
   volume={314},
   publisher={Springer-Verlag, Berlin},
   date={1996},
}

\bib{A24}{article}{
   author={Adams, David R.},
   author={Hurri-Syrj\"{a}nen, Ritva},
   title={Capacity estimates},
   journal={Proc. Amer. Math. Soc.},
   volume={131},
   date={2003},
   number={4},
   pages={1159--1167},
}

\bib{A4}{article}{
   author={Adams, David R.},
   author={Meyers, Norman G.},
   title={Bessel potentials. Inclusion relations among classes of
   exceptional sets},
   journal={Bull. Amer. Math. Soc.},
   volume={77},
   date={1971},
   pages={968--970},
}

\bib{A8}{article}{
   author={Adams, David R.},
   author={Meyers, Norman G.},
   title={Thinness and Wiener criteria for non-linear potentials},
   journal={Indiana Univ. Math. J.},
   volume={22},
   date={1972/73},
   pages={169--197},
}

\bib{A5}{article}{
   author={Adams, David R.},
   author={Meyers, Norman G.},
   title={Bessel potentials. Inclusion relations among classes of
   exceptional sets},
   journal={Indiana Univ. Math. J.},
   volume={22},
   date={1972/73},
   pages={873--905},
}

\bib{A2}{article}{
   author={Adams, D. R.},
   author={Pierre, M.},
   title={Capacitary strong type estimates in semilinear problems},
   journal={Ann. Inst. Fourier (Grenoble)},
   volume={41},
   date={1991},
   number={1},
   pages={117--135},
}

\bib{A10}{article}{
   author={Adams, David R.},
   author={Polking, John C.},
   title={The equivalence of two definitions of capacity},
   journal={Proc. Amer. Math. Soc.},
   volume={37},
   date={1973},
   pages={529--534},
}

\bib{A3}{article}{
   author={Adams, David R.},
   author={Xiao, Jie},
   title={Strong type estimates for homogeneous Besov capacities},
   journal={Math. Ann.},
   volume={325},
   date={2003},
   number={4},
   pages={695--709},
}

\bib{A25}{article}{
   author={Adams, David R.},
   author={Xiao, Jie},
   title={Nonlinear potential analysis on Morrey spaces and their
   capacities},
   journal={Indiana Univ. Math. J.},
   volume={53},
   date={2004},
   number={6},
   pages={1629--1663},
}

\bib{Anger:1977}{article}{
   author={Anger, Bernd},
   title={Representation of capacities},
   journal={Math. Ann.},
   volume={229},
   date={1977},
   pages={245--258},
}

\bib{CJ}{article}{
   author={Cerd\`a, Joan},
   title={Lorentz capacity spaces},
   conference={
      title={Interpolation theory and applications},
   },
   book={
      series={Contemp. Math.},
      volume={445},
      publisher={Amer. Math. Soc., Providence, RI},
   },
   date={2007},
   pages={45--59},
}

\bib{CCM}{article}{
   author={Cerd\`a, Joan},
   author={Coll, Heribert},
   author={Mart\'{\i}n, Joaquim},
   title={Entropy function spaces and interpolation},
   journal={J. Math. Anal. Appl.},
   volume={304},
   date={2005},
   number={1},
   pages={269--295},
   issn={0022-247X},
}

\bib{CMS}{article}{
   author={Cerd\`a, Joan},
   author={Mart\'{\i}n, Joaquim},
   author={Silvestre, Pilar},
   title={Capacitary function spaces},
   journal={Collect. Math.},
   volume={62},
   date={2011},
   pages={95--118},
}

\bib{Chen-Spector}{article}{
   author={Chen, You-Wei},
   author={Spector, Daniel},
   title={On functions of bounded $\beta$-dimensional mean oscillation},
   journal={arXiv:2207.06979},
}

\bib{Choquet:1954}{article}{
    author={Choquet, Gustave},
    title={Theory of capacities},
   journal={Ann. Inst. Fourier, Grenoble},
   volume={5},
    date={1954},
    pages={131--295},
 }

\bib{C1}{article}{
   author={Dafni, Galia},
   author={Xiao, Jie},
   title={Some new tent spaces and duality theorems for fractional Carleson
   measures and $Q_\alpha(\R^n)$},
   journal={J. Funct. Anal.},
   volume={208},
   date={2004},
   number={2},
   pages={377--422},
}

\bib{Dahlberg:1979}{article}{
   author={Dahlberg, Bj{\"o}rn E. J.},
   title={A note on Sobolev spaces},
   conference={
      title={Harmonic analysis in Euclidean spaces (Proc. Sympos. Pure
      Math., Williams Coll., Williamstown, Mass., 1978), Part 1},
   },
   book={
      series={Proc. Sympos. Pure Math., XXXV},
      publisher={Amer. Math. Soc.},
      place={Providence, RI},
   },
   date={1979},
   pages={183--185},
}

\bib{Denneberg}{book}{
   author={Denneberg, Dieter},
   title={Non-additive measure and integral},
   series={Theory and Decision Library. Series B: Mathematical and
   Statistical Methods},
   volume={27},
   publisher={Kluwer Academic Publishers Group, Dordrecht},
   date={1994},
   pages={x+178},
}

\bib{C2}{article}{
   author={Ferone, Adele},
   author={Korobkov, Mikhail V.},
   author={Roviello, Alba},
   title={On the Luzin $N$-property and the uncertainty principle for
   Sobolev mappings},
   journal={Anal. PDE},
   volume={12},
   date={2019},
   number={5},
   pages={1149--1175},
}

\bib{HH23}{article}{
   author={Harjulehto, Petteri},
   author={Hurri-Syrj\"{a}nen, Ritva},
   title={On Choquet integrals and Poincaré-Sobolev inequalities},
   journal={arXiv:2203.15623},
}

\bib{Cap1}{article}{
   author={Kruglyak, Natan},
   author={Kuznetsov, Evgeny A.},
   title={Sharp integral estimates for the fractional maximal function and
   interpolation},
   journal={Ark. Mat.},
   volume={44},
   date={2006},
   number={2},
   pages={309--326},
}

\bib{C3}{article}{
   author={Liu, Liguang},
   title={Hausdorff content and the Hardy-Littlewood maximal operator on
   metric measure spaces},
   journal={J. Math. Anal. Appl.},
   volume={443},
   date={2016},
   number={2},
   pages={732--751},
}
\bib{Cap2}{article}{
   author={Liu, L.},
   author={Xiao, J.},
   title={A trace law for the Hardy-Morrey-Sobolev space},
   journal={J. Funct. Anal.},
   volume={274},
   date={2018},
   number={1},
   pages={80--120},
}

\bib{C4}{article}{
   author={Liu, Liguang},
   author={Xiao, Jie},
   author={Yang, Dachun},
   author={Yuan, Wen},
   title={Restriction of heat equation with Newton-Sobolev data on metric
   measure space},
   journal={Calc. Var. Partial Differential Equations},
   volume={58},
   date={2019},
   number={5},
   pages={Paper No. 165, 40},
}

\bib{Mazya1}{article}{
   author={Maz\cprime ja, V. G.},
   title={The negative spectrum of the higher-dimensional Schr\"{o}dinger
   operator},
   language={Russian},
   journal={Dokl. Akad. Nauk SSSR},
   volume={144},
   date={1962},
   pages={721--722},
}

\bib{Mazya1a}{article}{
   author={Maz\cprime ja, V. G.},
   title={On the theory of the higher-dimensional Schr\"{o}dinger operator},
   language={Russian},
   journal={Izv. Akad. Nauk SSSR Ser. Mat.},
   volume={28},
   date={1964},
   pages={1145--1172},
}

\bib{Mazya2}{article}{
   author={Maz\cprime ja, V. G.},
   title={Certain integral inequalities for functions of several variables},
   language={Russian},
   conference={
      title={Problems of mathematical analysis, No. 3: Integral and
      differential operators, Differential equations (Russian)},
   },
   book={
      publisher={Izdat. Leningrad. Univ., Leningrad},
   },
   date={1972},
   pages={33--68},
}

\bib{C5}{article}{
   author={Ooi, Keng Hao},
   author={Phuc, Nguyen Cong},
   title={Characterizations of predual spaces to a class of Sobolev
   multiplier type spaces},
   journal={J. Funct. Anal.},
   volume={282},
   date={2022},
   number={6},
   pages={Paper No. 109348, 52},
}

\bib{C6}{article}{
   author={Ooi, Keng Hao},
   author={Phuc, Nguyen Cong},
   title={On a capacitary strong type inequality and related capacitary
   estimates},
   journal={Rev. Mat. Iberoam.},
   volume={38},
   date={2022},
   number={2},
   pages={589--599},
}

\bib{P}{book}{
   author={Ponce, Augusto C.},
   title={Elliptic PDEs, measures and capacities.
   From the Poisson equations to nonlinear Thomas-Fermi problems},
   series={EMS Tracts in Mathematics},
   volume={23},
   publisher={European Mathematical Society (EMS), Z\"{u}rich},
   date={2016},
}

\bib{Ponce-Spector:2018}{article}{
   author={Ponce, Augusto C.},
   author={Spector, Daniel},
   title={A boxing inequality for the fractional perimeter},
   journal={Ann. Sc. Norm. Super. Pisa Cl. Sci. (5)},
   volume={20},
   date={2020},
   number={1},
   pages={107--141},
}

\bib{Ponce-Spector:2020}{article}{
   author={Ponce, Augusto C.},
   author={Spector, Daniel},
   title={A decomposition by non-negative functions in the Sobolev space
   $W^{k,1}$},
   journal={Indiana Univ. Math. J.},
   volume={69},
   date={2020},
   number={1},
   pages={151--169},
}

\bib{C8}{article}{
   author={Saito, Hiroki},
   title={Boundedness of the strong maximal operator with the Hausdorff
   content},
   journal={Bull. Korean Math. Soc.},
   volume={56},
   date={2019},
   number={2},
   pages={399--406},
}

\bib{C10}{article}{
   author={Saito, Hiroki},
   author={Tanaka, Hitoshi},
   title={Dual of the Choquet spaces with general Hausdorff content},
   journal={Studia Math.},
   volume={266},
   date={2022},
   number={3},
   pages={323--335},
}

\bib{STW}{article}{
   author={Saito, Hiroki},
   author={Tanaka, Hitoshi},
   author={Watanabe, Toshikazu},
   title={Abstract dyadic cubes, maximal operators and Hausdorff content},
   journal={Bull. Sci. Math.},
   volume={140},
   date={2016},
   number={6},
   pages={757--773},
}

\bib{C7}{article}{
   author={Saito, Hiroki},
   author={Tanaka, Hitoshi},
   author={Watanabe, Toshikazu},
   title={Fractional maximal operators with weighted Hausdorff content},
   journal={Positivity},
   volume={23},
   date={2019},
   number={1},
   pages={125--138},
}
\bib{C9}{article}{
   author={Saito, Hiroki},
   author={Tanaka, Hitoshi},
   author={Watanabe, Toshikazu},
   title={Block decomposition and weighted Hausdorff content},
   journal={Canad. Math. Bull.},
   volume={63},
   date={2020},
   number={1},
   pages={141--156},
}

\bib{Spector:PM}{article}{
   author={Spector, Daniel},
   title={A noninequality for the fractional gradient},
   journal={Port. Math.},
   volume={76},
   date={2019},
   number={2},
   pages={153--168},
}

\bib{C11}{article}{
   author={Xiao, Jie},
   title={Homogeneous endpoint Besov space embeddings by Hausdorff capacity
   and heat equation},
   journal={Adv. Math.},
   volume={207},
   date={2006},
   number={2},
   pages={828--846},
}

\bib{Yang-Yuan:2008}{article}{
   author={Yang, Dachun},
   author={Yuan, Wen},
   title={A note on dyadic Hausdorff capacities},
   journal={Bull. Sci. Math.},
   volume={132},
   date={2008},
   pages={500--509},
}

\end{biblist}
	
\end{bibdiv}

\end{document}


\bibliographystyle{amsalpha}